\newcommand{\bigslant}[2]{{\raisebox{.2em}{$#1$}\left/\raisebox{-.2em}{$#2$}\right.}}
\newtheorem{Teo}{Theorem}[section]
\newtheorem{Prop}[Teo]{Proposition}
\newtheorem{Lema}[Teo]{Lemma}
\newtheorem{Cor}[Teo]{Corollary}
\theoremstyle{theorem}
\newtheorem{Def}[Teo]{Definition}
\newtheorem{Obs}[Teo]{Remark}
\newcommand{\Q}{\mathbb{Q}}
\newcommand{\Z}{\mathbb{Z}}
\newcommand{\N}{\mathbb{N}}
\newcommand{\lra}{\longrightarrow}
\newcommand{\VR}{\mathcal{O}}
\newcommand{\MI}{\mathfrak{m}}
\newcommand{\supp}{\mbox{\rm supp}}
\newcommand{\K}{\mathbb{K}}
\DeclareMathOperator{\inv}{in}
\begin{document}
\title{Graded rings associated to valuations and direct limits}
\author{Silva de Souza, C. H., Novacoski, J. A. and Spivakovsky, M.}
\thanks{During the realization of this project the authors were supported by a grant from Funda\c c\~ao de Amparo \`a Pesquisa do Estado de S\~ao Paulo (process numbers 2017/17835-9 and 2021/13531-0).}

\begin{abstract} In this paper, we study the structure of the graded ring associated to a limit key polynomial $Q_n$ in terms of the key polynomials that define $Q_n$. In order to do that, we use direct limits. In general, we describe the direct limit of a family of graded rings associated to a totally ordered set of valuations. As an example, we describe the graded ring associated to a valuation-algebraic valuation as a direct limit of graded rings associated to residue-transcendental valuations. 

\end{abstract}

%\keywords{Key polynomials, graded algebras, MacLane-Vaqui\'e key polynomials, abstract key polynomials}
\subjclass[2010]{Primary 13A18}

\maketitle

\section{Introduction}

The graded ring structure associated to a valuation $\nu$,  denoted by $\mathcal{G}_\nu$ (see \linebreak Definition~\ref{defAnelGrad}), has proved to be an important object  on Valuation Theory. For \linebreak example, the graded ring describes information of the value group $\nu\K$ and the residue field $\K\nu$ 
% of a given valuation $\nu$
 simultaneously. It was proved in \cite{Matheus} that $\mathcal{G}_\nu$ is isomorphic to the semigroup ring $\K\nu[t^{\nu \K}]$ with a suitable multiplication. Also, this structure is related to the program developed by Teissier to prove \textit{local uniformization}, an open problem in positive characteristic  with  applications in resolution of singularities. This program is based on the study of the spectrum of certain graded rings (see \cite{teissier}). 

\vspace{0.2cm}

Other important objects, which are also linked with  programs to prove local uniformization,  are key polynomials (see Definition~\ref{defiPoliChave}). These polynomials 
were introduced by  Mac Lane in  \cite{MacLane} and  generalized years later by Vaquié in \cite{Vaq}, using the structure of graded ring. We will refer to them as \textit{Mac Lane-Vaquié key polynomials}.  
After that, Novacoski and Spivakovsky in \cite{josneiKeyPolyPropriedades} and Decaup, Mahboub
and Spivakovsky in \cite{spivamahboubkeypoly} introduced a new notion of key polynomials, which is the one we use in this paper. These two definitions can be well understood by using graded rings, as one can see in \cite{Andrei} and \cite{josneimonomial}. 

\vspace{0.2cm}

Among key polynomials,  the so called limit key polynomials are of great interest to us. Limit key polynomials were introduced in \cite{Vaq} and are one of the main aspects of the generalization of Mac Lane's original key polynomials by Vaquié. Here we use a formulation similar to the one presented in \cite{josneiKeyPolyPropriedades} (see Definition~\ref{defLKP2}). These polynomials  are related to the existence of defect, which is an obstacle when dealing with valuations and local uniformization. For example, in the case  where the valuation has a unique extension, the defect is the product of the \textit{relative degrees} of  limit key polynomials (see  \cite{Nart},  \cite{Saturnino} or \cite{Vaq2}). 

\vspace{0.2cm}

%
%The main purpose of this work is to study graded algebras associated to limit key polynomials in  terms of the polynomials that precede it. 

%importancia algebra graduada, programa de teissier, uniformazação local
%
%The local uniformization problem is an important problem in Valuation Theory because of 

%
%
%polinomio chave limite, defeito da extensão, grau relativo polinomio chave limite é o defeito no caso extensao unica
%
%algebra graduada descreve propriedades do grupode valores e corpo de residuos, motivação entender a algebra graduada de poli chave limite em termos de alg graduada de plato que o antecede. 

For a valuation $\nu$ on $\K[x]$, we consider  the set $\Psi_n$ of all key polynomials for $\nu$ of degree $n$.  In this paper, we study the structure of the graded ring associated to a limit key polynomial $Q_n$ for $\Psi_n$, denoted by $\mathcal{G}_{Q_n}$,  in terms of the key polynomials $Q\in \Psi_n$.  A  related problem was studied in \cite{Vaq}. Here, we approach this problem by describing $\mathcal{G}_{Q_n}$ as the direct limit of a direct system defined by the graded rings $\mathcal{G}_Q$ and the maps presented in Section~\ref{GradedRings}.

\vspace{0.2cm}

Take a valuation $\nu_0$  on $\K$ with value group $\Gamma_0$. Fix a totally ordered divisible group $\Gamma$ containing $\Gamma_0$.  Take $\Gamma_\infty:=\Gamma \cup\{\infty\}$ with the usual extension of  addition and  order. Let
$$\mathcal{V} = \{\nu_0\}\cup \{\nu: \K[x] \rightarrow \Gamma_\infty \mid \nu \text{ is a valuation extending } \nu_0  \} .$$
%($\Gamma_\infty$ is defined in Section~\ref{Preli}).
 Consider the partial order on $\mathcal{V}$ given by $\nu_0\leq \nu$ for every $\nu\in \mathcal{V}$ and, for $\nu, \mu\in \mathcal{V}\setminus\{   \nu_0\}$, we set $\nu \leq \mu$ if and only if $\nu(f)\leq \mu(f)$ for every $f\in \K[x]$. Our first result deals with an arbitrary totally ordered subset $\mathfrak{v}= \{\nu_i\}_{i\in I}\subset \mathcal{V}$ such that there exists  $\nu\in \mathcal{V}$ satisfying $\nu_i\leq \nu$ for every $i\in I$. Theorem~\ref{teoLimVmaiorRstable} will give us that $\underset{\longrightarrow}{\lim\,}\mathcal{G}_{\nu_i}$%\cong R,$  where
%$$R = \langle \{ \inv_\nu(f)\mid f \text{ is } \mathfrak{v}\text{-stable} \} \rangle\subseteq  \mathcal{G}_{\nu} $$
is isomorphic to the additive subgroup $R$ of $\mathcal{G}_{\nu}$ generated by the set
% $\{ \inv_\nu(f)\mid f \text{ is } \mathfrak{v}\text{-stable} \}$, denoted by
  $ \{ \inv_\nu(f)\mid f \text{ is } \mathfrak{v}\text{-stable} \} $ ($\mathfrak{v}$-stability is defined in Section~\ref{TotallyOrderLimits}).

\vspace{0.2cm}

Next, we divide the totally ordered subsets $\mathfrak{v}= \{\nu_i\}_{i\in I}\subset \mathcal{V}$ into three types: the ones with maximum, the ones without maximum such that every $f\in \K[x]$ is $\mathfrak{v}$-stable and the ones  without maximum such that there exists at least one polynomial that is not $\mathfrak{v}$-stable. We show that in the first and second cases there exists $\nu \in \mathcal{V}$, that we will denote by $\displaystyle\sup_{i\in I}\nu_i $, satisfying $\nu\geq \nu_i$ for every $i\in I$ and  $R = \mathcal{G}_{\nu}$ (Proposition~\ref{lemVstable} and Corollary~\ref{teoLimitSeqStable}). In the third case, we show that for a polynomial $Q$ of smallest degree that is not $\mathfrak{v}$-stable we can define $\mu\in \mathcal{V}$ such  that $\mu$ is equal to its truncation at $Q$ (see Definition~\ref{defTruncamento}), $\mu\geq \nu_i$ for every $i\in I$ and $R = R_Q$, where \linebreak $R_Q= \langle \{ \inv_\mu(f)\mid \deg(f)<\deg(Q) \}\rangle\subset \mathcal{G}_\mu$ (Proposition~\ref{lemamuQgammaVal} and Corollary~\ref{teoLimitSeqNotStable}).

%
%there exists $\mu\in \mathcal{V}$ and =\mu_Q$, with $Q$ polynomial of smallest degree not $\mathfrak{v}$-stable, $\mu\geq \nu_i$ for every $i\in I$ and such that $R = R_Q = \langle \{ \inv_\mu(f)\mid \deg(f)<\deg(Q) \}$ (Proposition~\ref{lemamuQgammaVal} and Corollary~\ref{teoLimitSeqNotStable}). 

\vspace{0.2cm}

We then give two applications of the previous results. The first one concerns limit key polynomials, our main interest. We prove that, given a limit key polynomial $Q_n$ for $\Psi_n$, the subset $\mathfrak{v}= \{\nu_Q\}_{Q\in \Psi_n}\subset \mathcal{V}$ is totally ordered without a maximum and $Q_n$ is a polynomial of smallest degree that is not $\mathfrak{v}$-stable (Corollary~\ref{corQnNotvStable}). Therefore,  $\underset{\longrightarrow}{\lim\,}\mathcal{G}_{\nu_i}\cong R_{Q_n}$ (Corollary~\ref{corLimQn}).

\vspace{0.2cm}

The second application concerns valuation-algebraic valuations (see Definition~\ref{defValAlg}). We prove that, given a valuation-algebraic valuation $\nu$, there exists a totally ordered subset $\mathfrak{v}= \{\nu_Q\}_{Q\in \textbf{Q}}\subset \mathcal{V}$   without maximum with each $Q$ a key polynomial for $\nu$, $\nu_Q$ a residue-transcendental valuation and $\nu = \displaystyle\sup_{Q\in \textbf{Q}}\nu_Q$ (Proposition~\ref{propValAlgStable}).  Therefore,  $\underset{\longrightarrow}{\lim\,}\mathcal{G}_{Q}\cong \mathcal{G}_{\nu}$ (Corollary~\ref{corValgLim}).

\vspace{0.2cm}
 
%resultados: 
%
%1)  dado sequencia vi e v maior vi, então limGnui = R sendo R  gerado por invf, f v stable. 
%
%2) corolarios se todo mundo é stable, então R=Gnu; se existe um elemento não estável, tome o Q de menor grau, defina mu e teremos R=RQ
% 
%3) Como primeira aplicação, vemos que se Qn é LKP de Psin então {nuQ} é sem maximo com elemento não estável e limGQ =RQn
%
%4) como segunda aplicação, se v é algebrica então existe {vQ} cada nuQ residuo transcedente, todo f vstable, v=sup vQ e limGQ=Gv

This paper is  organized as follows. In Section \ref{Preli}, we present the main definitions and results that will be used throughout the paper.  In Section \ref{GradedRings}, we present the main results about graded rings associated to a valuation that will be useful in our discussions. In Section \ref{TotallyOrderLimits}, for  a given totally ordered subset $\mathfrak{v}= \{\nu_i\}_{i\in I}\subset \mathcal{V}$, we begin presenting  some properties of the direct limit of the direct system $\{( \mathcal{G}_{\nu_i}, \phi_{ij})\}^{i,j\in I }_{i\leq j}$ and prove Theorem~\ref{teoLimVmaiorRstable}. Then we prove Corollary~\ref{teoLimitSeqStable} and Corollary~\ref{teoLimitSeqNotStable} in  Subsections \ref{FirstSecondCases} and \ref{ThirdCase}, respectively. In Section \ref{LKPLimit},  we \linebreak describe the graded ring associated to a limit key  polynomial via Corollary~\ref{corLimQn}. In Section \ref{ValAlgLimit}, we describe the graded ring associated to a valuation-algebraic valuation via Corollary~\ref{corValgLim}.

\par\medskip
\textbf{Acknowledgements.} We would like to thank the anonymous referee for carefully reading, providing useful suggestions and pointing out a few mistakes in an earlier version of this paper.

\section{Preliminaries}\label{Preli}

%definição valorização, notação

\begin{Def}
Take a commutative ring $R$ with unity. A \index{Valuation}\textbf{valuation} on $R$ is a mapping $\nu:R\lra \Gamma_\infty :=\Gamma \cup\{\infty\}$ where $\Gamma$ is a totally ordered abelian group (and the extension of addition and order to $\infty$ is done in the natural way), with the following properties:
\begin{description}
	\item[(V1)] $\nu(ab)=\nu(a)+\nu(b)$ for all $a,b\in R$.
	\item[(V2)] $\nu(a+b)\geq \min\{\nu(a),\nu(b)\}$ for all $a,b\in R$.
	\item[(V3)] $\nu(1)=0$ and $\nu(0)=\infty$.
\end{description}
\end{Def}

\vspace{0.2cm} 	

 Let $\nu: R \lra\Gamma_\infty$ be a valuation. The set $\supp(\nu)=\{a\in R\mid \nu(a )=\infty\}$ is called the \textbf{support of $\nu$}. The \textbf{value group of $\nu$} is the subgroup of $\Gamma$ generated by 
$\{\nu(a)\mid a \in R\setminus \supp(\nu) \}
$ and is denoted by $\nu R$. A valuation $\nu$ is a \index{Valuation!Krull}\textbf{Krull valuation} if $\supp(\nu)=\{0\}$.  If $\nu$ is a Krull valuation, then $R$ is a domain and we can extend $\nu$ to $\K={\rm Quot}(R)$ on the usual way. In this case, 
 define the \textbf{valuation ring} as $\VR_\nu:=\{ a\in\K\mid \nu(a)\geq 0 \}$. The ring $\VR_\nu$ is a local ring with unique maximal ideal $\MI_\nu:=\{a\in\K\mid \nu(a)>0 \}. $ We define the \textbf{residue field} of $\nu$ to be the field $\VR_\nu/\MI_\nu$ and denote it by $\K\nu$. The image of $a\in \VR_\nu$ in $\K\nu$ is denoted by $a\nu$. 
 
 \vspace{0.2cm} 	

\begin{Obs}Take  a valuation $\nu$ on a field $\K$ and  a valuation  $\overline{\nu}$ on $\overline{\K}$, the algebraic closure of $\K$, such that $\overline{\nu}|_{\K}=\nu$. Then $\overline{\nu}\overline{\K}$ is a divisible group. Additionally, $\overline{\nu}\overline{\K}=\nu\K\otimes_\Z \Q$ (see \cite{Eng}, p.79). About the residue fields, it is known that $\overline{\K}\overline{\nu} $ is the algebraic closure of $\K\nu$ (see \cite{Eng}, p.66).

\end{Obs}

\vspace{0.2cm} 	

%definição polinomio chave (com delta)

Fix a valuation  $\nu$ on $\K[x]$, the ring of polynomials in one indeterminate over the field $\K$. Our main definition of key polynomial relates to the one in \cite{josneiKeyPolyPropriedades}, % and ???mateus???
 which is related to the one in    \cite{spivamahboub}.  Fix an algebraic closure $\overline{\K}$  for $\K$ and fix  a valuation $\overline{\nu}$ on $\overline{\K}[x]$ such that $\overline{\nu}|_{\K[x]}=\nu$.

\vspace{0.2cm} 	

\begin{Def}
	
	Let $f\in \overline{\K}[x]$ be a non-zero polynomial.
	
	\begin{itemize}
	\item If $\deg(f)>0$, set 
	$$\delta(f):=\max\{\overline{\nu}(x-a)\mid a\in \overline{\K} \text{ and }f(a)=0\}.$$
	
	\item If $\deg(f)=0$, set $\delta(f) = -\infty$.
	
	\end{itemize}
%	
%		
%	A root $a$ of $f$ such that $\delta(f)=\mu(x-a)$ is called an \textbf{optimizing root} of $f$. 

\end{Def}

\vspace{0.2cm} 	

\begin{Obs} According to \cite{josneiKeyPolyMinimalPairs}, $\delta(f)$ does not depend on the choice of the algebraic closure $\overline{\K}$ or the extension $\overline{\nu}$ of $\nu$. 

\end{Obs}

\vspace{0.2cm}

\begin{Def}\label{defiPoliChave}
	A monic polynomial $Q\in \K[x]$ is a  \textbf{key polynomial} of level $\delta(Q)$ if, for every $f\in \K[x]$,
	$$\delta(f)\geq \delta(Q) \Longrightarrow \deg(f)\geq \deg(Q). $$
\end{Def}

%equivalencia definição polichave 
%
\vspace{0.2cm} 	

 Let $q\in \K[x]$ be a non-constant polynomial. Then there exist uniquely determined polynomials $f_0,\ldots, f_s\in \K[x]$ with $\deg(f_i)<\deg(q)$ for every $i$, $0\leq i\leq s$, such that
 \begin{equation}\label{eqQexpansion}
 f = f_0+f_1q+\ldots +f_sq^s.
 \end{equation}
%$$f = f_0+f_1q+\ldots +f_sq^s.$$
We call this expression the \textbf{\textit{q}-expansion} of $f$.

\vspace{0.2cm} 	

\begin{Prop}\label{propequivPolichave}(Lemma 2.3 \textbf{(iii)} of \cite{josneiKeyPolyPropriedades}  + Corollary 3.52 of \cite{leloup}) 
%(Theorem 5.7 of \cite{josneicaio})
 Let $Q\in \K[x]$ be a monic polynomial. The following assertions are equivalent.

\begin{enumerate}[label=\rm (\roman*):]

\item $Q$ is a key polynomial for $\nu$.

\item For every $f,g\in \K[x]$ with $\deg(f)<\deg(Q)$ and $\deg(g)<\deg(Q)$, if \linebreak $fg=lQ+r$ is the $Q$-expansion of $fg$, then $\nu(fg)=\nu(r)<\nu(lQ)$.
\end{enumerate}
%\begin{description}
%\item[(i)] $Q$ is a key polynomial for $\nu$.
%	
%\item[(ii)] For every $f,g\in \K[x]$ with $\deg(f)<\deg(Q)$ and $\deg(g)<\deg(Q)$, if \linebreak $fg=lQ+r$ is the $Q$-expansion of $fg$, then $\nu(fg)=\nu(r)<\nu(lQ)$.
%	
%%\item[(iii)] The set $R_q$ is a subring of $\mathcal G_q$.
%%	
%%\item[(iv)] For every $f,g\in \K[x]$ we have $\delta_q(fg)=\delta_q(f)+\delta_q(g)$.
%%	
%%\item[(v)] For every $f,g\in \K[x]$, if $\delta_q(f)=0=\delta_q(g)$ then $\delta_q(fg)=0$.
%
%\end{description}
%%
% $Q$ is a key polynomial if and only if for every $f,g\in \K[x]$ with $\deg(f),\deg(g)<\deg(Q)$, if $fg=lQ+r$ is the $Q$-expansion of $fg$, then $\nu(fg)=\nu(r)<\nu(lQ)$.
	
\end{Prop}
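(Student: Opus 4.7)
The plan is to prove the two implications separately.

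For $\mathrm{(i)} \Rightarrow \mathrm{(ii)}$, I start by noting that the hypothesis $\deg(f), \deg(g) < \deg(Q)$ forces $\deg(fg) < 2\deg(Q)$, so the $Q$-expansion of $fg$ collapses to $fg = lQ + r$ with $\deg(l), \deg(r) < \deg(Q)$. The core technical tool is the evaluation identity
\[
\overline{\nu}(h(\alpha)) \;=\; \nu(h)
\]
valid for every $h \in \K[x]$ with $\deg(h) < \deg(Q)$, where $\alpha \in \overline{\K}$ is any root of $Q$ realizing $\overline{\nu}(x - \alpha) = \delta(Q)$. To prove it, factor $h = c\prod_i (x - a_i)$ in $\overline{\K}[x]$; the contrapositive of the key polynomial property forces $\delta(h) < \delta(Q)$, so $\overline{\nu}(x - a_i) < \overline{\nu}(x - \alpha)$ for each $i$, and the standard ultrametric identity gives $\overline{\nu}(\alpha - a_i) = \overline{\nu}(x - a_i)$, whence $\overline{\nu}(h(\alpha)) = \overline{\nu}(c) + \sum_i \overline{\nu}(x - a_i) = \nu(h)$. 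Evaluating $fg = lQ + r$ at $\alpha$ kills the term $lQ$, and applying the identity to $f$, $g$ and $r$ yields $\nu(f) + \nu(g) = \nu(r)$, i.e.\ $\nu(fg) = \nu(r)$.

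The strict inequality $\nu(r) < \nu(lQ)$, when $l \neq 0$, is the main obstacle: the ultrametric inequality only gives $\nu(lQ) = \nu(fg - r) \geq \nu(fg) = \nu(r)$, and evaluation at $\alpha$ is blind to $lQ$ because $Q(\alpha) = 0$. To exclude equality I would pass to the graded ring $\mathcal{G}_\nu$: if $\nu(fg) = \nu(lQ) = \nu(r)$, then in the homogeneous component of degree $\nu(fg)$ the relation $fg - lQ = r$ reads
\[
\inv_\nu(f)\,\inv_\nu(g) \;=\; \inv_\nu(l)\,\inv_\nu(Q) + \inv_\nu(r),
\]
with $\inv_\nu(l) \neq 0$; this is ruled out by the structural behavior of $\inv_\nu(Q)$ developed in Section~\ref{GradedRings} (essentially the primality of $\inv_\nu(Q)$ with respect to initial forms of polynomials of degree $< \deg(Q)$).

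For $\mathrm{(ii)} \Rightarrow \mathrm{(i)}$, I argue by contrapositive. If $Q$ is not a key polynomial, fix $f \in \K[x]$ with $\deg(f) < \deg(Q)$ and $\delta(f) \geq \delta(Q)$, witnessed by a root $a$ of $f$ with $\overline{\nu}(x - a) \geq \delta(Q)$. This bad root breaks the chain of ultrametric equalities used in the first paragraph, and I would exploit it by choosing $g \in \K[x]$ with $\deg(g) < \deg(Q)$ (for instance a product of suitable $\K$-conjugates of $x - a$, or a truncation of the minimal polynomial of $a$) so that in the $Q$-expansion $fg = lQ + r$ one verifies directly that either $\nu(fg) > \nu(r)$ or $\nu(lQ) \leq \nu(r)$, contradicting (ii). The subtle point of the whole argument remains the strict inequality above, which is where the graded-ring machinery of Section~\ref{GradedRings} has to be brought to bear.
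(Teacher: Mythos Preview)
The paper does not prove this proposition at all: it is quoted verbatim from the cited references (Lemma~2.3\,(iii) of \cite{josneiKeyPolyPropriedades} and Corollary~3.52 of \cite{leloup}), so there is no ``paper's own proof'' to compare against. I therefore assess your argument on its merits.

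Your treatment of the equality $\nu(fg)=\nu(r)$ in $\mathrm{(i)}\Rightarrow\mathrm{(ii)}$ is correct and is exactly the standard route: the evaluation identity $\overline{\nu}(h(\alpha))=\nu(h)$ for $\deg(h)<\deg(Q)$ follows from $\delta(h)<\delta(Q)$ as you say, and substituting $x=\alpha$ into $fg=lQ+r$ gives the result.

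The genuine gap is the strict inequality $\nu(r)<\nu(lQ)$. Your appeal to ``the structural behavior of $\inv_\nu(Q)$ developed in Section~\ref{GradedRings}'' does not go through. First, the results there (Propositions~\ref{lemAlgGraduadaAnelPoliy} and~\ref{propequivPolichaveRq}) concern the graded ring $\mathcal G_q$ of the \emph{truncation} $\nu_q$, not $\mathcal G_\nu$; nothing in that section establishes primality of $\inv_\nu(Q)$ in $\mathcal G_\nu$. Second, even granting such primality, your displayed relation $\inv_\nu(f)\inv_\nu(g)=\inv_\nu(l)\inv_\nu(Q)+\inv_\nu(r)$ does not yield a contradiction: it merely says $\inv_\nu(Q)$ divides $\inv_\nu(f)\inv_\nu(g)-\inv_\nu(r)$, and there is no reason offered why it should not. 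Third, and most seriously, Proposition~\ref{propequivPolichaveRq} is essentially a reformulation of the present proposition (it is the equivalence ``$q$ key polynomial $\Leftrightarrow$ $R_q$ is a subring''), so invoking it here is circular. The strict inequality is in fact the substantive content of the cited lemmas and requires a separate argument (for instance, a careful comparison of $\nu(Q)$ with the values of lower-degree polynomials, or an induction on degree as in \cite{josneiKeyPolyPropriedades}).

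For $\mathrm{(ii)}\Rightarrow\mathrm{(i)}$ you have only an outline: you indicate that a bad root $a$ of $f$ should let you manufacture a counterexample $g$, but you neither specify $g$ nor verify which inequality fails. As written this direction is not a proof.
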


\vspace{0.2cm}

\begin{Def}\label{defTruncamento} Let $q\in \K[x]$ be a non-constant polynomial and $\nu$ a valuation on $\K[x]$. 
%For a given $f\in \K[x]$, denote by $f_0, \ldots, f_s$ the coefficients of the $q$-expansion of $f$. 
The map 
$$\nu_q(f):=\underset{0\leq i\leq s}{\min} \{ \nu(f_iq^i)  \},$$
for $f\in \K[x]$ as in (\ref{eqQexpansion}), is called the \textbf{truncation} of $\nu$ at $q$.
\end{Def}
\vspace{0.2cm} 	

%
% Hence, we can construct a map given by
%$$\nu_q(f):=\underset{0\leq i\leq s}{\min} \{ \nu(f_iq^i)  \},$$
%and call it the \textbf{truncation} of $\nu$ at $q$.
 This map is not always a valuation, as we can see in Example 2.4 of \cite{josneiKeyPolyPropriedades}.  

\vspace{0.2cm}

\begin{Prop}(Proposition 2.6 of \cite{josneiKeyPolyMinimalPairs}) If $Q$ is a key polynomial, then $\nu_Q$ is a valuation on $\K[x]$.
\end{Prop}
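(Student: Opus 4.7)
The axioms (V3) and (V2) are straightforward. For (V3), the $Q$-expansion of $1$ has only the constant term $1$, so $\nu_Q(1)=\nu(1)=0$; and $\nu_Q(0)=\infty$ by convention. For (V2), if $f=\sum_i f_iQ^i$ and $g=\sum_i g_iQ^i$ are the $Q$-expansions, then $f+g=\sum_i(f_i+g_i)Q^i$ is the $Q$-expansion of $f+g$ (the degrees of $f_i+g_i$ are still less than $\deg(Q)$), and
$$\nu_Q(f+g)=\min_i\nu((f_i+g_i)Q^i)\geq \min_i\min\{\nu(f_iQ^i),\nu(g_iQ^i)\}\geq\min\{\nu_Q(f),\nu_Q(g)\},$$
using that $\nu$ itself satisfies (V2).

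The main step is multiplicativity (V1). Write $f=\sum_{i=0}^s f_iQ^i$ and $g=\sum_{j=0}^t g_jQ^j$, set $\alpha=\nu_Q(f)$, $\beta=\nu_Q(g)$, let $I=\{i:\nu(f_iQ^i)=\alpha\}$, $J=\{j:\nu(g_jQ^j)=\beta\}$, and put $i_0=\max I$, $j_0=\max J$. Since $\deg(f_ig_j)<2\deg(Q)$, the $Q$-expansion $f_ig_j=l_{ij}Q+r_{ij}$ has $\deg(l_{ij}),\deg(r_{ij})<\deg(Q)$. Now Proposition~\ref{propequivPolichave} applied to the key polynomial $Q$ yields
$$\nu(f_ig_j)=\nu(r_{ij})<\nu(l_{ij}Q).$$
Expanding and regrouping, the $Q$-expansion of $fg$ has coefficient of $Q^k$ equal to
$$c_k=\sum_{i+j=k}r_{ij}+\sum_{i+j=k-1}l_{ij},$$
and each term satisfies $\nu(r_{ij}Q^{i+j})=\nu(f_iQ^i)+\nu(g_jQ^j)\geq\alpha+\beta$ and $\nu(l_{ij}Q^{i+j+1})>\nu(f_ig_jQ^{i+j})\geq\alpha+\beta$. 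Hence $\nu_Q(fg)\geq\alpha+\beta$.

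For the reverse inequality, examine $c_{i_0+j_0}Q^{i_0+j_0}$. The term $r_{i_0j_0}Q^{i_0+j_0}$ has $\nu$-value exactly $\alpha+\beta$. Any other pair $(i,j)$ with $i+j=i_0+j_0$ satisfies either $i>i_0$ or $j>j_0$; by maximality of $i_0$ in $I$ (resp.\ $j_0$ in $J$), this forces $\nu(f_iQ^i)>\alpha$ or $\nu(g_jQ^j)>\beta$, hence $\nu(r_{ij}Q^{i+j})=\nu(f_ig_jQ^{i+j})>\alpha+\beta$. Similarly, for $(i,j)$ with $i+j=i_0+j_0-1$ we have $\nu(l_{ij}Q^{i+j+1})>\nu(f_ig_jQ^{i+j})\geq\alpha+\beta$. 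So in the sum for $c_{i_0+j_0}Q^{i_0+j_0}$, exactly one term attains $\nu$-value $\alpha+\beta$ and all others strictly exceed it, yielding $\nu(c_{i_0+j_0}Q^{i_0+j_0})=\alpha+\beta$ and therefore $\nu_Q(fg)=\alpha+\beta$.

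The only delicate point is the last paragraph: one must ensure that the minimum-value contribution to $c_{i_0+j_0}$ is not cancelled. The choice of $i_0,j_0$ as the \emph{maximal} indices achieving the minima is precisely what rules out cancellation, and the strict inequality $\nu(l_{ij}Q)>\nu(r_{ij})$ supplied by the key polynomial characterisation (Proposition~\ref{propequivPolichave}) is what guarantees that carry terms from lower powers of $Q$ cannot contribute at the critical level either.
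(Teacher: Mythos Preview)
The paper does not give its own proof of this proposition; it merely cites \cite{josneiKeyPolyMinimalPairs}, Proposition~2.6. Your argument is a correct, self-contained verification. Axioms (V2) and (V3) are immediate as you say, and for (V1) you correctly compute the $Q$-expansion of $fg$ via the partial quotients $l_{ij}$ and remainders $r_{ij}$, use the characterisation of key polynomials in Proposition~\ref{propequivPolichave} to bound the carry terms strictly above $\alpha+\beta$, and then isolate the coefficient $c_{i_0+j_0}$ where the unique minimum is realised without cancellation. The choice of $i_0=\max I$, $j_0=\max J$ is exactly the right device for this.

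By way of comparison: the proof in \cite{josneiKeyPolyMinimalPairs} proceeds via the extension $\overline\nu$ to $\overline{\K}[x]$ and the theory of minimal pairs, exploiting the definition of key polynomial through $\delta(Q)$. Your route is more elementary and combinatorial, relying instead on the equivalent formulation of Proposition~\ref{propequivPolichave}; since that proposition is stated (and sourced) in the paper before the present one, there is no circularity. Both approaches are standard; yours has the advantage of staying entirely inside $\K[x]$.
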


%lemas de polinomio chave
\vspace{0.2cm} 	

In the next lemmas, we state some properties of key polynomials and truncations. In what follows, we denote by $\Gamma_\Q:=\nu(\K[x])\otimes_\Z\Q$ the divisible hull of $\nu(\K[x])$.

\vspace{0.2cm} 

\begin{Lema}\label{prop3itensQQlinhaPoliChaves}(Proposition 2.10 of \cite{josneiKeyPolyPropriedades})
	Let $Q,Q'\in \K[x]$  be key polynomials for $\nu$.
	%with  $\delta(Q),\delta(Q')\in \Gamma_\Q$.  
	We have the following.
	\begin{enumerate}
		\item If $\deg(Q)<\deg(Q')$, then $\delta(Q)<\delta(Q')$.
		
		\item If $\delta(Q)<\delta(Q')$, then  $\nu_Q(Q')<\nu(Q')$.
		
		\item If $\deg(Q)=\deg(Q')$, then
		$$\nu(Q)<\nu(Q')\Longleftrightarrow \nu_Q(Q')<\nu(Q')\Longleftrightarrow \delta(Q)<\delta(Q'). $$
	\end{enumerate}
\end{Lema}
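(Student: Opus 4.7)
Part (1) is immediate: applying the key polynomial property of $Q'$ to $f=Q$ and contraposing, we cannot have $\delta(Q)\geq\delta(Q')$ together with $\deg(Q)<\deg(Q')$, so $\deg(Q)<\deg(Q')$ forces $\delta(Q)<\delta(Q')$.

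For (2), I fix a root $a^{*}\in\overline{\K}$ of $Q'$ realizing $\overline{\nu}(x-a^{*})=\delta(Q')$ and work with the $Q$-expansion $Q'=\sum_{i=0}^{s}f_iQ^i$. The central computation is that $\overline{\nu}(f_i(a^{*}))=\nu(f_i)$ and $\overline{\nu}(Q(a^{*}))=\nu(Q)$. To see this, observe that since $\deg(f_i)<\deg(Q)$ and $Q$ is a key polynomial, we have $\delta(f_i)<\delta(Q)<\delta(Q')=\overline{\nu}(x-a^{*})$; hence for every root $b$ of $f_i$ in $\overline{\K}$, $\overline{\nu}(x-b)\leq\delta(f_i)<\overline{\nu}(x-a^{*})$, and applying the ultrametric triangle inequality to $a^{*}-b=(a^{*}-x)+(x-b)$ gives $\overline{\nu}(a^{*}-b)=\overline{\nu}(x-b)$. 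Multiplying over the roots of $f_i$ (and accounting for the leading coefficient) yields the claim for $f_i$, and the same argument applied to $Q$ (using $\delta(Q)<\delta(Q')$) gives it for $Q$. Hence each summand satisfies $\overline{\nu}(f_i(a^{*})Q(a^{*})^i)=\nu(f_iQ^i)$. Now the vanishing $Q'(a^{*})=0$ means
$$\overline{\nu}\Bigl(\sum_i f_i(a^{*})Q(a^{*})^i\Bigr)=\infty > \nu_Q(Q')=\min_i\nu(f_iQ^i),$$
so the terms of minimal valuation must cancel. Transferring this cancellation to the polynomial ring $\K[x]$ via the initial forms in $\mathcal{G}_\nu$ yields $\inv_\nu Q'=0$ in degree $\nu_Q(Q')$, hence $\nu(Q')>\nu_Q(Q')$.

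For (3), the equal-degree monic assumption reduces the $Q$-expansion of $Q'$ to $Q'=Q+(Q'-Q)$ with $\deg(Q'-Q)<\deg(Q)$, so $\nu_Q(Q')=\min\{\nu(Q),\nu(Q'-Q)\}$. A brief case analysis on the ultrametric identity $\nu(Q')=\nu(Q+(Q'-Q))$ yields $\nu_Q(Q')<\nu(Q')\Leftrightarrow\nu(Q)<\nu(Q')$. The implication $\delta(Q)<\delta(Q')\Rightarrow\nu_Q(Q')<\nu(Q')$ is part (2); for the converse $\nu(Q)<\nu(Q')\Rightarrow\delta(Q)<\delta(Q')$, I use (2) and the previous equivalence with the roles of $Q,Q'$ swapped, which gives $\delta(Q')<\delta(Q)\Rightarrow\nu(Q')<\nu(Q)$. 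By trichotomy on $\delta(Q)$ versus $\delta(Q')$: the case $\delta(Q)>\delta(Q')$ gives $\nu(Q)>\nu(Q')$, contradicting the assumption; the case $\delta(Q)=\delta(Q')$ is ruled out by showing $\nu(Q)=\nu(Q')$ via the bound $\nu(Q-Q')\geq\max\{\nu(Q),\nu(Q')\}$, obtained by evaluating $Q-Q'$ at a root of $Q$ realizing $\delta(Q)$ (applying the same ultrametric lemma as in (2), noting that $\delta(Q-Q')<\delta(Q)$ since $\deg(Q-Q')<\deg(Q)$). Thus $\delta(Q)<\delta(Q')$.

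The main obstacle is the transfer step in part (2): going from the cancellation of the evaluations $f_i(a^{*})Q(a^{*})^i$ in $\overline{\K}$ to cancellation of the initial forms $\inv_\nu(f_i)\cdot\inv_\nu(Q)^i$ in $\mathcal{G}_\nu$. This reflects the broader principle, central to the graded-ring approach to key polynomials, that specialization at a suitable root preserves the graded-ring structure in the relevant degree; everything else is ultrametric bookkeeping or immediate from the key polynomial definition.
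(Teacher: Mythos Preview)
The paper does not supply its own proof of this lemma---it is cited from \cite{josneiKeyPolyPropriedades}---so there is no in-paper argument to compare against. Your arguments for (1) and for the equivalence $\nu(Q)<\nu(Q')\Leftrightarrow\nu_Q(Q')<\nu(Q')$ in (3) are correct, and the remainder of (3) is fine once (2) is available. The problem lies in (2), precisely at the ``transfer'' step you yourself flag as the main obstacle: it is not a routine verification but a genuine gap.

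You correctly show $\overline\nu(f_i(a^*))=\nu(f_i)$ and $\overline\nu(Q(a^*))=\nu(Q)$, so the vanishing $Q'(a^*)=0$ forces the scalars $f_i(a^*)Q(a^*)^i$ of minimal $\overline\nu$-value to cancel in $\overline\K$. But cancellation of these scalars does \emph{not} force the polynomials $f_iQ^i$ to cancel to higher $\nu$-value in $\K[x]$. Concretely: take the $2$-adic Gauss extension $\nu$ on $\Q[x]$, $Q=x$, $Q'=x-1$, $a^*=1$. Then $f_0=-1$, $f_1=1$, and one checks $\overline\nu(f_i(a^*))=\nu(f_i)=0$, $\overline\nu(Q(a^*))=\nu(Q)=0$, and $Q'(a^*)=0$; yet $\nu(Q')=0=\nu_Q(Q')$. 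This does not contradict the lemma (here $\delta(Q)=\delta(Q')$), but it shows that the three ingredients your transfer step actually uses---the two valuation equalities and the vanishing at $a^*$---are by themselves insufficient to conclude $\nu(Q')>\nu_Q(Q')$. In other words, the ``specialization preserves the graded-ring structure'' principle you invoke is false at this level of generality: evaluation at $a^*$ is not injective on the relevant graded piece of $\mathcal{G}_\nu$.

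The strict inequality $\delta(Q)<\delta(Q')$ must therefore be used again, inside the transfer step, not merely to obtain the preliminary equalities. The argument in \cite{josneiKeyPolyPropriedades} does this via the $\epsilon$-description through Hasse derivatives (recalled here as $\epsilon=\delta$); alternatively, the minimal-pair description of $\nu_Q$ in \cite{josneiKeyPolyMinimalPairs} gives a direct comparison. If you want to rescue the specialization idea, you would need to formulate and prove an honest injectivity statement for the evaluation map on the subring of $\mathcal{G}_\nu$ generated by $\inv_\nu(Q)$ and the $\inv_\nu(f_i)$, and that requires more than what you have written.
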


\vspace{0.2cm} 

\begin{Lema}\label{lempolichaveEpsilonmenor} (Corollaries 3.9, 3.10, 3.11 and 3.13 of \cite{josneimonomial}) Let $Q,Q'\in \K[x]$  be key polynomials such that $\delta(Q)\leq \delta(Q')$.
\begin{enumerate}

\item We have $\nu_{Q'}(Q)=\nu(Q)$.

\item  For every $f\in \K[x]$, we have
$\nu_Q(f)\leq \nu_{Q'}(f). $
In particular, if $\delta(Q)= \delta(Q')$ then $\nu_Q=\nu_{Q'}$.

\item For every $f\in \K[x]$, if $\nu_Q(f)=\nu(f)$, then $\nu_{Q'}(f)=\nu(f)$.

\item  For every $f\in \K[x]$, if $\delta(Q)< \delta(Q')$ and $\nu_{Q'}(f)<\nu(f)$,  then \linebreak $\nu_Q(f)< \nu_{Q'}(f). $
\end{enumerate}

\end{Lema}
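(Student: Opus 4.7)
The four items will be proved in order, each building on the previous; Lemma~\ref{prop3itensQQlinhaPoliChaves} is the main input throughout. An observation used repeatedly is that $\delta(Q)\leq\delta(Q')$ forces $\deg(Q)\leq\deg(Q')$, for otherwise Lemma~\ref{prop3itensQQlinhaPoliChaves}(1) applied to $Q'$ and $Q$ would give $\delta(Q')<\delta(Q)$.

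For item~(1), I would analyze the $Q'$-expansion of $Q$ directly. If $\deg(Q)<\deg(Q')$, the expansion is just $Q$ itself, so $\nu_{Q'}(Q)=\nu(Q)$ trivially. If $\deg(Q)=\deg(Q')$, then since both are monic of the same degree the $Q'$-expansion is $Q=(Q-Q')+1\cdot Q'$ with $\deg(Q-Q')<\deg(Q')$. By Lemma~\ref{prop3itensQQlinhaPoliChaves}(3), $\delta(Q)\leq\delta(Q')$ gives $\nu(Q)\leq\nu(Q')$, so the ultrametric inequality yields $\nu(Q-Q')\geq\nu(Q)$, and therefore $\nu_{Q'}(Q)=\min\{\nu(Q-Q'),\nu(Q')\}=\nu(Q)$.

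For item~(2), given the $Q$-expansion $f=\sum_i f_iQ^i$ with $\deg(f_i)<\deg(Q)$, the estimate
\[
\nu_{Q'}(f)\;\geq\;\min_i\bigl(\nu_{Q'}(f_i)+i\,\nu_{Q'}(Q)\bigr)\;=\;\min_i\bigl(\nu(f_i)+i\,\nu(Q)\bigr)\;=\;\nu_Q(f)
\]
follows from (1) together with $\nu_{Q'}(f_i)=\nu(f_i)$ (since $\deg(f_i)<\deg(Q)\leq\deg(Q')$ makes $f_i$ its own $Q'$-expansion). Exchanging the roles of $Q$ and $Q'$ when $\delta(Q)=\delta(Q')$ yields equality. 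Item~(3) is then immediate: the chain $\nu(f)=\nu_Q(f)\leq\nu_{Q'}(f)\leq\nu(f)$, the outer bound being the general truncation estimate, sandwiches $\nu_{Q'}(f)$ between two equal quantities.

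The main obstacle is item~(4), which upgrades (2) to a strict inequality. Assuming $\delta(Q)<\delta(Q')$ and $\nu_{Q'}(f)<\nu(f)$, the contrapositive of (3) already gives $\nu_Q(f)<\nu(f)$, so it remains to exclude $\nu_Q(f)=\nu_{Q'}(f)$. My plan is an argument by contradiction using the $Q$-expansion $f=\sum_i f_iQ^i$: writing $d=\nu_Q(f)$ and $I=\{i\colon\nu(f_iQ^i)=d\}$, the strict inequality $\nu(f)>d$ forces a cancellation relation $\sum_{i\in I}\inv_\nu(f_iQ^i)=0$ in $\mathcal{G}_\nu$, and one must show this prevents $\nu_{Q'}(f)=d$. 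The strict hypothesis $\delta(Q)<\delta(Q')$ enters through Lemma~\ref{prop3itensQQlinhaPoliChaves}(2), which gives $\nu_Q(Q')<\nu(Q')$; this strict gap is what separates the behavior of $\inv_\nu$ and $\inv_{\nu_{Q'}}$ on the relevant terms and produces the desired second cancellation, forcing $\nu_{Q'}(f)>d=\nu_Q(f)$. The delicate point—and what I expect to require care—is tracking how the cancellation in $\mathcal{G}_\nu$ descends to (and is reflected in) $\mathcal{G}_{\nu_{Q'}}$, which is the step where the strictness of $\delta(Q)<\delta(Q')$ is genuinely used rather than merely the weak inequality of~(2).
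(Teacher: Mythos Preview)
The paper does not give a proof of this lemma; it is stated with a citation to Corollaries~3.9--3.13 of \cite{josneimonomial}. So there is no in-paper argument to compare against, and your proposal should be assessed on its own.

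Your arguments for items (1)--(3) are correct and essentially complete. The observation $\deg(Q)\le\deg(Q')$ via Lemma~\ref{prop3itensQQlinhaPoliChaves}(1), the explicit $Q'$-expansion of $Q$ in the two degree cases, the chain
\[
\nu_{Q'}(f)\ \ge\ \min_i\bigl(\nu_{Q'}(f_i)+i\,\nu_{Q'}(Q)\bigr)\ =\ \min_i\bigl(\nu(f_i)+i\,\nu(Q)\bigr)\ =\ \nu_Q(f),
\]
and the sandwich $\nu(f)=\nu_Q(f)\le\nu_{Q'}(f)\le\nu(f)$ all work exactly as you describe.

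Item (4) is where there is a genuine gap. You correctly reduce to the situation $\nu_Q(f)=\nu_{Q'}(f)=d<\nu(f)$ and observe that the $Q$-expansion produces a cancellation $\sum_{i\in I}\inv_\nu(f_iQ^i)=0$ in $\mathcal G_\nu$. What you then need is the corresponding relation $\sum_{i\in I}\inv_{\nu_{Q'}}(f_iQ^i)=0$ in $\mathcal G_{\nu_{Q'}}$. But the canonical map $\mathcal G_{\nu_{Q'}}\to\mathcal G_\nu$ points the wrong way for this transfer: it has nontrivial kernel (exactly the classes $\inv_{\nu_{Q'}}(h)$ with $\nu_{Q'}(h)<\nu(h)$), so a relation in the target does not pull back to one in the source. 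Concretely, if you set $g=\sum_{i\in I}f_iQ^i$, then one checks $\nu_Q(g)=\nu_{Q'}(g)=d<\nu(g)$, so $g$ sits in precisely the same configuration as $f$ and no reduction has occurred. The ingredient you invoke, $\nu_Q(Q')<\nu(Q')$ from Lemma~\ref{prop3itensQQlinhaPoliChaves}(2), is a statement about $\nu_Q$ rather than $\nu_{Q'}$, and your sketch does not explain how it produces the required cancellation inside $\mathcal G_{\nu_{Q'}}$. The proofs in \cite{josneimonomial} bring in further structure (the support sets $S_Q(f)$ of the $Q$-expansion and monomial/Newton-polygon type control) which your outline does not reproduce; without some such input, the strict inequality in (4) is not established.
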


\vspace{0.2cm} 

\begin{Obs}In \cite{josneiKeyPolyPropriedades} and \cite{josneimonomial} the definition of key polynomial and the above results are stated using the notion of $\epsilon(f)$ instead of $\delta(f)$. For a non-zero polynomial  $f\not \in \supp(\nu)$ with $\deg(f)>0$, one defines
		$$\epsilon(f):=\underset{1\leq b\leq \deg(f)}{\max}\left\lbrace\left.\frac{\nu(f) - \nu(\partial_bf)}{b} \;\right| \partial_bf\not\in \supp(\nu) \right\rbrace\in \Gamma_\Q, $$ where $\partial_bf$ is the formal  Hasse-derivative of order $b$ of $f$. If $f\not \in \supp(\nu)$ and $\deg(f)=0$, then we set  $\epsilon(f):=-\infty$ and if $f\in\supp(\nu)$, then we set $\epsilon(f):= \infty$.
  Proposition 3.1 of \cite{josneiKeyPolyMinimalPairs} shows that $\delta(f)=\epsilon(f)$ for all $f\in \K[x]$.

\end{Obs}

\vspace{0.2cm} 

\section{Graded ring associated to a valuation}\label{GradedRings}

Let $\nu$  be a valuation on $\K[x]$.
% e suponhamos que $q\in \K[x]$ seja tal que $\nu_q$ é uma valorização. 
%If $Q$ is a key polynomial for $\nu$, then $\nu_Q$ is a valuation in $\K[x]$ (Proposition 2.6 of \cite{josneiKeyPolyPropriedades}). On the other hand, if $\nu_q$ is a valuation, then it is not necessarily true that $q$ is a key polynomial (see Corollary 2.4 of \cite{josneiKeyPolyMinimalPairs}). In this section we assume that $\nu$ is a valuation on $\K[x]$ and $q\in K[x]$ is such that $\nu_q$ is a valuation. We study when $q$ is a key polynomial.
For each $\gamma\in \nu(\K[x])$, we consider the abelian groups 
%\pagebreak
$$\mathcal{P}_\gamma = \{ f\in \K[x]\mid \nu(f)\geq \gamma \} \text{ and }  \mathcal{P}_{\gamma}^{+} = \{ f\in \K[x]\mid \nu(f)>\gamma \}. $$
%\pagebreak

\vspace{0.2cm} 

\begin{Def}\label{defAnelGrad}
The \textbf{graded ring } associated to $\nu$  is defined by 
	$$\mathcal G_\nu={\rm gr}_{\nu}(\K[x]):= \bigoplus_{\gamma\in \nu(\K[x])}\mathcal{P}_\gamma/ \mathcal{P}_{\gamma}^{+}.$$	
\end{Def}

\vspace{0.2cm} 

The sum on $\mathcal G_\nu$ is defined coordinatewise and the product is given by extending  the product of homogeneous elements, which is described by 
$$\left( f+ \mathcal{P}_{\beta}^{+}  \right) \cdot \left( g+ \mathcal{P}_{\gamma}^{+} \right): = \left(  fg +\mathcal{P}_{\beta+\gamma}^{+}\right), $$
where $\beta= \nu(f)$ and $\gamma = \nu(g)$.

\vspace{0.2cm} 

For $f\not \in \supp(\nu)$,  we denote by $\inv_\nu(f)$ the image of  $f$ in $\mathcal{P}_{\nu(f)}/ \mathcal{P}_{\nu(f)}^{+} \subseteq \mathcal{G}_\nu.$ \linebreak If $f\in \supp(\nu)$, then we define $\inv_\nu(f)=0$. The next lemma follows from the definitions above. 

\vspace{0.2cm} 

\begin{Lema}\label{lemPropinv} 
Let $f,g\in \K[x]$. We have the following. 
	\begin{enumerate}
		
		\item $\mathcal{G}_\nu$ is an integral domain. 
		
		\item $\inv_\nu(f)\cdot \inv_\nu(g)=\inv_\nu(fg)$.

		\item $\inv_\nu(f)=\inv_\nu(g)$ if and only if 
		%$\nu(f)=\nu(g)$ and 
		$\nu(f-g)>\nu(f)=\nu(g)$.
	\end{enumerate}

\end{Lema}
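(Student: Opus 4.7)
My plan is to establish the three parts in the order (2), (1), (3), since (2) is used in (1), and (3) is essentially a definition-chase.

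For (2), I would unpack the definition of the multiplication on $\mathcal{G}_\nu$. If either $f$ or $g$ lies in $\supp(\nu)$, then by (V1) and (V3) the product $fg$ also lies in $\supp(\nu)$, so both sides vanish by the convention $\inv_\nu(\cdot)=0$ on the support. Otherwise, writing $\beta=\nu(f)$ and $\gamma=\nu(g)$, the defining formula for the product of homogeneous elements gives
\[
\inv_\nu(f)\cdot\inv_\nu(g) \;=\; fg+\mathcal{P}_{\beta+\gamma}^{+} \;=\; fg+\mathcal{P}_{\nu(fg)}^{+} \;=\; \inv_\nu(fg),
\]
using (V1) to rewrite $\beta+\gamma=\nu(fg)$.

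For (1), I would use the standard minimal-degree argument. Given nonzero $A,B\in\mathcal{G}_\nu$, write each as a finite sum of nonzero homogeneous components and let $\alpha,\beta\in\nu(\K[x])$ be the least degrees occurring in $A$ and $B$ respectively (these exist since $\Gamma$ is totally ordered and the sums are finite). Each minimal component has the form $\inv_\nu(f)$ and $\inv_\nu(g)$, and because these representatives satisfy $\nu(f)=\alpha$ and $\nu(g)=\beta$ strictly (otherwise they would lie in $\mathcal{P}_\alpha^+$ or $\mathcal{P}_\beta^+$), we have $f,g\notin\supp(\nu)$. By (2), the degree-$(\alpha+\beta)$ component of $AB$ equals $\inv_\nu(fg)$, since minimality rules out any other cross terms at that degree. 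As $\supp(\nu)$ is prime (a direct consequence of (V1)), $fg\notin\supp(\nu)$, so $\inv_\nu(fg)\neq 0$, whence $AB\neq 0$.

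For (3), the $(\Leftarrow)$ direction is immediate: $\nu(f)=\nu(g)=\gamma$ places $\inv_\nu(f)$ and $\inv_\nu(g)$ in the same graded piece $\mathcal{P}_\gamma/\mathcal{P}_\gamma^{+}$, and $\nu(f-g)>\gamma$ places $f-g\in\mathcal{P}_\gamma^{+}$, so the two cosets coincide. For $(\Rightarrow)$, if $\inv_\nu(f)=\inv_\nu(g)\neq 0$, the direct-sum decomposition of $\mathcal{G}_\nu$ forces the two sides to live in the same graded piece, so $\nu(f)=\nu(g)=:\gamma$, and the coset equality $f+\mathcal{P}_\gamma^{+}=g+\mathcal{P}_\gamma^{+}$ yields $f-g\in\mathcal{P}_\gamma^{+}$, i.e.\ $\nu(f-g)>\nu(f)$. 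I do not anticipate any real obstacle here; the only mildly delicate point is the boundary case in (3) where both polynomials lie in $\supp(\nu)$, which is either excluded implicitly or handled by convention.
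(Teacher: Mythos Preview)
The paper states this lemma without proof (it is presented as a collection of standard, well-known facts about the graded ring), so there is no argument in the paper to compare against. Your proposed proof is correct and supplies precisely the routine verifications one would expect: part (2) is immediate from the definition of the product and axiom (V1); part (1) follows from (2) via the minimal-degree argument together with the primality of $\supp(\nu)$; and part (3) is a direct unpacking of what equality of cosets in a single graded piece means. Your remark about the boundary case in (3) when $f,g\in\supp(\nu)$ is apt: as written the ``only if'' direction is slightly awkward there (one would need $\nu(f-g)>\infty$), so the statement should be read for $f,g\notin\supp(\nu)$, which is the tacit convention throughout the paper.
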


\vspace{0.2cm} 

Let $\nu_i$ and $\nu_j$ be valuations on $\K[x]$ such that $\nu_i(f)\leq \nu_j(f)$ for all $f\in \K[x]$. Let $\mathcal{P}_\gamma(\K[x],\nu_i) =\{ f\in \K[x]\mid \nu_i(f)\geq \gamma \} $ (analogously we define $\mathcal{P}_\gamma(\K[x],\nu_{j}), \mathcal{P}_{\gamma}^{+}(\K[x],\nu_i)$ and $\mathcal{P}_{\gamma}^{+}(\K[x],\nu_{j})$).
We have the inclusions
	$$\mathcal{P}_\gamma(\K[x],\nu_i)\subseteq  \mathcal{P}_\gamma(\K[x],\nu_{j})$$
	and
	$$\mathcal{P}_{\gamma}^{+}(\K[x],\nu_i)\subseteq  \mathcal{P}_{\gamma}^{+}(\K[x],\nu_{j}) $$
	for any $\gamma\in \nu_i(\K[x])\subseteq \nu_{j}(\K[x])$.
We consider the following map: %defined in \cite{Andrei}:
%\begin{align*}
%\phi_{ij}:\mathcal G_{{\nu_i}&\longrightarrow \mathcal G_{\nu_j} \\
%\phi_{qq'}(\inv_q(f)) &\longmapsto \begin{cases}
%		\inv_{q'}(f)& \mbox{ se }\nu_q(f)=\nu_{q'}(f)\\ 
%		0&\mbox{ se }\nu_q(f)<\nu_{q'}(f)
%	\end{cases}
%\end{align*}
\begin{align}\label{eqPhiij}
	\phi_{ij}: \hspace{0.3cm}\mathcal G_{\nu_i}\hspace{0.2cm} &\longrightarrow \hspace{0.2cm} \mathcal G_{\nu_j}\\
	\inv_{\nu_i}(f) &\longmapsto  \begin{cases}
		\inv_{\nu_j}(f)& \mbox{ if }\nu_i(f)=\nu_{j}(f)\\ 
		0&\mbox{ if }\nu_i(f)<\nu_{j}(f),
	\end{cases}  \nonumber
	\end{align}

\noindent and we extend this map naturally for an arbitrary element. This map is well-defined (Corollary 5.5 of \cite{Andrei}) and, by construction, it is a homomorphism of graded rings.

\vspace{0.2cm} 

Suppose that  $q\in \K[x]$ is such that $\nu_q$ is a valuation.
% In this case, we denote $\mathcal{G}_{\nu_q}$ by $\mathcal{G}_q$ and $\inv_{\nu_q}(f)$ by $\inv_q(f)$.  
% e suponhamos que $q\in \K[x]$ seja tal que $\nu_q$ é uma valorização. 
Let $R_q$  be the additive subgroup of $\mathcal G_{\nu_q}$ generated by the set
$$\{ \inv_{\nu_q}(f)\mid f\in \K[x]_d  \}, $$
where  $d=\deg(q)$ and $\K[x]_d=\{ f\in \K[x]\mid \deg(f)<d \}$. We set $y_q:=\inv_{\nu_q}(q)$. The next propositions say that a non-zero $y_q$ can be seen as a transcendental element over $R_q$ and that $R_q$ is a subring of $\mathcal G_{\nu_q}$ if and only if $q$ is a key polynomial for $\nu$. 

%\label{lemyqTrans}
\begin{Prop}\label{lemAlgGraduadaAnelPoliy}(Proposition 4.5 of \cite{josneimonomial}) We have 
$$ \mathcal G_{\nu_q}= R_q[y_q].$$
Moreover, if $q\in  \supp(\nu)$, then $\mathcal G_{\nu_q} = R_q$. If $q\not\in \supp(\nu)$, then $y_q$ is such that if 
$$a_0+a_1y_q+\ldots +a_sy_q^s=0 $$
for some $a_0, \ldots, a_s \in R_q$, then $a_i =0$ for every $i$, $0\leq i \leq s$. 
\end{Prop}

\vspace{0.2cm}

%\begin{Prop}\label{lemAlgGraduadaAnelPoliy}
% We have
%	$ \mathcal G_q= R_q[y].$ 
%	
%\end{Prop}

\begin{Prop}\label{propequivPolichaveRq}(Theorem 5.7 of \cite{josneicaio})
Suppose $\nu_q$ is a valuation on $\K[x]$. Then the following assertions are equivalent.

\begin{enumerate}[label=\rm (\roman*):]

\item $q$ is a key polynomial for $\nu$.

\item The set $R_q$ is a subring of $\mathcal G_{\nu_q}$.

\end{enumerate}
%
%\begin{description}
%\item[(i)] $q$ is a key polynomial for $\nu$.
%	
%%\item[(ii)] For every $f,g\in \K[x]$ with $\deg(f),\deg(g)<\deg(q)$, if $fg=lq+r$ is the $q$-expansion of $fg$, then $\nu(fg)=\nu(r)<\nu(lq)$.
%	
%\item[(ii)] The set $R_q$ is a subring of $\mathcal G_q$.
%	
%%\item[(iv)] For every $f,g\in \K[x]$ we have $\delta_q(fg)=\delta_q(f)+\delta_q(g)$.
%%	
%%\item[(v)] For every $f,g\in \K[x]$, if $\delta_q(f)=0=\delta_q(g)$ then $\delta_q(fg)=0$.
%
%\end{description}
\end{Prop}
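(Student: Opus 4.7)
The plan is to prove the equivalence by packaging the forward direction into a direct application of Proposition \ref{propequivPolichave} and the reverse direction into a transcendence argument via Proposition \ref{lemAlgGraduadaAnelPoliy}.

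First I would reduce the statement to a single multiplicative identity. Since $R_q$ is already an additive subgroup of $\mathcal{G}_q$ by construction, asking that $R_q$ be a subring amounts to asking that it be closed under multiplication. By distributivity and Lemma \ref{lemPropinv}(2), this closure reduces to verifying that $\inv_q(f)\cdot\inv_q(g)=\inv_q(fg)$ lies in $R_q$ whenever $f,g\in\K[x]_d$, where $d=\deg(q)$. The essential (elementary) observation is that $\deg(fg)<2d$, so the $q$-expansion $fg=lq+r$ automatically has both $l$ and $r$ in $\K[x]_d$; in particular $\nu_q(l)=\nu(l)$ and $\nu_q(r)=\nu(r)$, and also $\nu_q(fg)=\nu(f)+\nu(g)=\nu(fg)$.

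For (i)$\Rightarrow$(ii), I would feed the above setup into Proposition \ref{propequivPolichave}(ii) to obtain $\nu(fg)=\nu(r)<\nu(lq)$. Then $\nu_q(fg-r)=\nu_q(lq)=\nu(l)+\nu(q)=\nu(lq)>\nu(r)=\nu_q(r)=\nu_q(fg)$, and Lemma \ref{lemPropinv}(3) yields $\inv_q(fg)=\inv_q(r)\in R_q$, so $R_q$ is multiplicatively closed.

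For (ii)$\Rightarrow$(i), the main tool is Proposition \ref{lemAlgGraduadaAnelPoliy}, which identifies $\mathcal{G}_q$ with $R_q[y_q]$ in such a way that $R_q$ is exactly the subring of elements with no positive power of $y_q$. From $fg=r+lq$ I would write, inside $R_q[y_q]$,
\[
\inv_q(fg)\;=\;[r]_{\nu_q(fg)}+[l]_{\nu_q(fg)-\nu(q)}\cdot y_q,
\]
where $[h]_\gamma$ denotes the class of $h$ in $\mathcal{P}_\gamma/\mathcal{P}_\gamma^{+}$, which is $\inv_q(h)$ when $\nu_q(h)=\gamma$ and $0$ when $\nu_q(h)>\gamma$. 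Since $\inv_q(fg)\in R_q$ by hypothesis and $y_q$ is transcendental over $R_q$, the $y_q$-coefficient must vanish, forcing either $\nu_q(lq)>\nu_q(fg)$ (equivalently $\nu(r)<\nu(lq)$, since both $l,r\in\K[x]_d$) or $\inv_q(l)=0$. The latter alternative is degenerate: it gives $l=0$ (so $fg=r$ and $\nu(lq)=\infty>\nu(r)$ trivially) or $l\in\supp(\nu)$ (so $\nu(lq)=\infty$ and, unless $fg\in\supp(\nu)$ already, again $\nu(r)<\nu(lq)$). In every non-trivial case the relation $\nu(fg)=\nu(r)<\nu(lq)$ holds, which is exactly condition (ii) of Proposition \ref{propequivPolichave}, so $q$ is a key polynomial.

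The only delicate point will be the bookkeeping around the support of $\nu$: one should dispatch the case $q\in\supp(\nu)$ at the outset (where $y_q=0$, $\mathcal{G}_q=R_q$, and the statement is vacuous), and then handle the subcases $l=0$ and $l\in\supp(\nu)$ uniformly by observing that they all give $\nu_q(lq)>\nu_q(fg)$ in the graded ring and hence still fall under Proposition \ref{propequivPolichave}(ii).
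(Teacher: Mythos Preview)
The paper does not prove this proposition at all: it is quoted verbatim as Theorem~5.7 of \cite{josneicaio} and no argument is given. There is therefore nothing in the present paper to compare your proof against.

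That said, your argument is essentially correct and is the natural one. A couple of small remarks. In the forward direction your computation is clean; note only that the equality $\nu_q(fg)=\nu(fg)$ you invoke is really the identity $\nu_q(f)+\nu_q(g)=\nu(f)+\nu(g)$, which holds because $\deg f,\deg g<d$, so you do not need to know in advance that $\nu_q(fg)=\nu(fg)$ (in fact that is part of what you are proving). In the reverse direction your use of Proposition~\ref{lemAlgGraduadaAnelPoliy} is exactly right: writing $\inv_q(fg)=[r]_{\nu_q(fg)}+[l]_{\nu_q(fg)-\nu(q)}\,y_q$ with both bracket terms lying in $R_q$ (because $\deg l,\deg r<d$), the hypothesis $\inv_q(fg)\in R_q$ forces the $y_q$-coefficient to vanish by the linear independence statement in Proposition~\ref{lemAlgGraduadaAnelPoliy}, hence $\nu_q(lq)>\nu_q(fg)$, which unwinds to $\nu(fg)=\nu(r)<\nu(lq)$ and feeds back into Proposition~\ref{propequivPolichave}. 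Your treatment of the degenerate cases ($q\in\supp(\nu)$, $l=0$, $l\in\supp(\nu)$) is adequate; they all collapse to $\nu_q(lq)=\infty>\nu_q(fg)$ unless $fg\in\supp(\nu)$ already, in which case there is nothing to prove.
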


\section{Totally ordered sets of valuations and direct limits}\label{TotallyOrderLimits}

Take $\mathcal{V}$ as in the introduction, that is,
%
%Take a valuation $\nu_0$ on $\K$ and let $\Gamma_0$ denote the  value group of $\nu_0$. Fix a totally ordered divisible group $\Gamma$ containing $\Gamma_0$. Let 
$$\mathcal{V} = \{\nu_0\}\cup \{\nu: \K[x] \rightarrow \Gamma_\infty \mid \nu \text{ is a valuation extending } \nu_0  \}. $$
Consider the partial order on $\mathcal{V}$ given by $\nu_0\leq \nu$ for every $\nu\in \mathcal{V}$ and, for $\nu, \mu\in \mathcal{V}\setminus\{   \nu_0\}$, we set $\nu \leq \mu$ if and only if $\nu(f)\leq \mu(f)$ for every $f\in \K[x]$. We say that $\nu < \mu$ if $\nu\leq \mu$ and there exists $g\in \K[x]$ such that $\nu(g)<\mu(g)$. In $\mathcal{V}$ we have the following property.

\vspace{0.2cm} 

\begin{Prop}\label{prop3valMenor} (Proposition 2.2 of \cite{novbarnabe}) Assume $\eta, \nu, \mu\in \mathcal{V}$ are such that \linebreak  $\eta<\nu<\mu$. For $f\in \K[x]$, if $\eta(f)=\nu(f)$, then $\nu(f)=\mu(f)$.
\end{Prop}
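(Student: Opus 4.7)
The plan is to prove the contrapositive: assume $\nu(f)<\mu(f)$ and deduce $\eta(f)<\nu(f)$. The main device is a polynomial $Q\in\K[x]$ of minimal degree $d$ satisfying $\nu(Q)<\mu(Q)$; such a $Q$ exists because $\nu<\mu$. By minimality of $d$, every $h\in\K[x]$ with $\deg(h)<d$ satisfies $\nu(h)=\mu(h)$, which immediately forces $\deg(f)\geq d$.

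Next, I would write the $Q$-expansion $f=\sum_{i=0}^{s}f_iQ^i$ with $\deg(f_i)<d$, noting that $\nu(f_i)=\mu(f_i)$ for every $i$. Setting $\Delta:=\mu(Q)-\nu(Q)>0$, the individual term values shift: $\mu(f_iQ^i)=\nu(f_iQ^i)+i\Delta$, strictly increasing with $i\geq 1$. The condition $\nu(f)<\mu(f)$ precludes the minimum of the $\nu(f_iQ^i)$ being achieved only at $i=0$ (else $\nu(f)=\mu(f)=\nu(f_0)$ would follow), so some term with $i\geq 1$ must essentially contribute to $\nu(f)$ while being "killed" in passing to $\mu(f)$.

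I would then proceed by induction on $\deg(f)$. The base case $\deg(f)=0$ is immediate since all valuations in $\mathcal{V}$ extend $\nu_0$ and therefore agree on constants. For the inductive step with $\deg(f)=n$: if $d<n$, applying the contrapositive of the inductive hypothesis to $Q$ gives $\eta(Q)<\nu(Q)$. Repeating the term-by-term analysis above with the pair $(\eta,\nu)$ in place of $(\nu,\mu)$ and the strict inequality $\eta(Q)<\nu(Q)$ now playing the analogous role, one extracts $\eta(f)<\nu(f)$. The boundary case $d=n$, where $f=f_0+cQ$ with $c\in\K$ constant, requires a direct computation using the hypothesis $\eta(f)=\nu(f)$ and the relations between the three valuations on $f_0$ and $Q$ to rule out $\nu(f)<\mu(f)$.

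The principal technical obstacle is ensuring that the $Q$-expansion computes $\nu(f)$ \emph{exactly} as $\min_i\nu(f_iQ^i)$, rather than giving only the inequality $\geq$ (cancellations among terms of equal value would break the argument). This amounts to verifying that $Q$ is a key polynomial for $\nu$ via Proposition~\ref{propequivPolichave}, which one would establish from the minimality of $d$: for products $ab$ with $\deg(a),\deg(b)<d$ and $Q$-expansion $ab=lQ+r$, the identity $\nu(ab)=\mu(ab)$ (by minimality, $\nu=\mu$ on $a,b,l,r$) combined with the strict inequality $\mu(lQ)>\nu(lQ)$ (from $\mu(Q)>\nu(Q)$) should yield $\nu(ab)=\nu(r)<\nu(lQ)$, the key polynomial criterion. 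Making this verification airtight, and then tracking the same key polynomial structure back through the induction for the pair $(\eta,\nu)$, is where the bulk of the technical work resides.
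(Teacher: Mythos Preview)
The paper does not prove this proposition at all; it merely cites it from \cite{novbarnabe}. So there is no ``paper's own proof'' to compare against, and I can only assess your sketch on its merits.

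Your outline has two genuine gaps.

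\textbf{(1) Key polynomial does not give what you need.} You want the $Q$-expansion to compute $\nu(f)$ \emph{exactly}, i.e.\ $\nu(f)=\min_i\nu(f_iQ^i)$. But Proposition~\ref{propequivPolichave} only tells you that $Q$ being a key polynomial for $\nu$ makes the truncation $\nu_Q$ a valuation with $\nu_Q\le\nu$; it does \emph{not} force $\nu=\nu_Q$. Your argument that the minimum cannot be attained solely at $i=0$ is correct (and does not need $\nu=\nu_Q$), but the reverse step---``some $i\ge1$ contributes, hence $\eta(f)<\nu(f)$''---implicitly uses the exact formula, which you have not established. Incidentally, your verification that $Q$ is a key polynomial is itself incomplete: from $\nu(ab)=\mu(ab)$ and $\mu(lQ)>\nu(lQ)$ you can extract $\nu(ab)=\nu(r)$, but the \emph{strict} inequality $\nu(r)<\nu(lQ)$ does not follow; the case $\nu(r)=\nu(lQ)$ is not excluded by what you wrote.

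\textbf{(2) The induction does not transfer to the pair $(\eta,\nu)$.} Your $Q$ is chosen minimal for the pair $(\nu,\mu)$, so you know $\nu(f_i)=\mu(f_i)$ for the $Q$-expansion coefficients. When you ``repeat the analysis with $(\eta,\nu)$'', you would need $\eta(f_i)=\nu(f_i)$, and nothing in your setup gives this: the induction hypothesis applied to $f_i$ says only that $\eta(f_i)=\nu(f_i)$ \emph{implies} $\nu(f_i)=\mu(f_i)$, which is the wrong direction. If instead you switch to the minimal-degree polynomial $Q'$ for $(\eta,\nu)$, you lose the link to the hypothesis $\nu(f)<\mu(f)$. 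Bridging these two expansions is exactly the missing idea; the standard route (as in \cite{novbarnabe} or the Mac Lane--Vaqui\'e literature) is to show that the ``tangent directions'' of $\nu$ and $\mu$ over $\eta$ coincide, i.e.\ the sets $\{g:\eta(g)<\nu(g)\}$ and $\{g:\eta(g)<\mu(g)\}$ are equal, which is a statement about $\mathcal{G}_\eta$ rather than an induction on $\deg(f)$.
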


\vspace{0.2cm}

Let $\mathfrak{v}= \{\nu_i\}_{i\in I}\subset \mathcal{V}$ be a totally ordered set. Consider a total order on the index set  $I$ induced from the order on $\mathfrak{v}$. %In particular, $i<j$ implies $\nu_i<\nu_j$, that is, $\nu_i(f)\leq \nu_j(f)$ for every $f\in \K[x]$ and there exists $g\in \K[x]$ such that $\nu_i(g)<\nu_j(g)$.
 Since we have a total order, 
% by $\leq $, we have a preorder on $\mathfrak{v}$ and 
%Given $\nu_i,\nu_j\in \mathfrak{v}$, we must have $\nu_i\leq \nu_j$ or $\nu_j\leq \nu_i$. Then, $\nu_k=\max\{\nu_i,\nu_j \}$ satisfies $\nu_i\leq \nu_k$ and $\nu_j\leq \nu_k$. Therefore, 
$(I, \leq)$ is a directed set.\footnote{That is, $\leq$ is reflexive and transitive relation on $\mathfrak{v}$ such that, for every $\nu_i,\nu_j\in \mathfrak{v}$, there exists $\nu_k\in \mathfrak{v}$ satisfying  $\nu_i\leq \nu_k$ and $\nu_j\leq \nu_k$.}
%
%\begin{Lema}Let $\mathfrak{v}= \{\nu_i\}_{i\in I}$ be a totally ordered set in $\mathcal{V}$. Then $(\mathfrak{v}, \leq)$ is a directed set. 
%\end{Lema}
%
%\begin{proof} Since  $\mathfrak{v}= \{\nu_i\}_{i\in I}$ is totally ordered by $\leq $, we have a preorder on $\mathfrak{v}$. Given $\nu_i,\nu_j\in \mathfrak{v}$, we must have $\nu_i\leq \nu_j$ or $\nu_j\leq \nu_i$. Then, $\nu_k=\max\{\nu_i,\nu_j \}$ satisfies $\nu_i\leq \nu_k$ and $\nu_j\leq \nu_k$. Therefore, $(\mathfrak{v}, \leq)$ is a directed set.
%
%\end{proof}

\vspace{0.2cm}

\begin{Lema}\label{lemDirectSystemValTotalOrder}
	Let $\mathfrak{v}= \{\nu_i\}_{i\in I}$ be a totally ordered set in $\mathcal{V}$. Consider the family of graded rings $\{  \mathcal{G}_{\nu_i}\}_{i\in I }$.
	% Let us denote $\inv_{\nu_i}(f)$ by $\inv_i(f)$. 
	For $\nu_i \leq  \nu_j$, let $\phi_{ij}$ be the map
	\begin{align*}
	\phi_{ij}:\hspace{0.3cm}\mathcal G_{\nu_i}\hspace{0.2cm} &\longrightarrow \hspace{0.2cm}\mathcal G_{\nu_j}\\
	\inv_{\nu_i}(f) &\longmapsto  \begin{cases}
		\inv_{\nu_j}(f)& \mbox{ if }\nu_i(f)=\nu_{j}(f)\\ 
		0&\mbox{ if }\nu_i(f)<\nu_{j}(f),
	\end{cases} 
	\end{align*}
extended in a natural way to arbitrary (that is, not necessarily homogeneous) elements of $\mathcal G_{\nu_i}$. Then  $\{( \mathcal{G}_{\nu_i}, \phi_{ij})\}^{i,j\in I }_{i\leq j}$ is a direct system over $I$.

\end{Lema}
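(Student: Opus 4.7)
The plan is to verify the two defining axioms of a direct system, namely the identity condition $\phi_{ii} = \id_{\mathcal{G}_{\nu_i}}$ and the compatibility condition $\phi_{jk} \circ \phi_{ij} = \phi_{ik}$ for all $i \leq j \leq k$ in $I$. The fact that $(I,\leq)$ is directed and that each $\phi_{ij}$ is a well-defined graded ring homomorphism has already been established (the former from the discussion preceding the lemma, the latter from Corollary 5.5 of \cite{Andrei}), so these need not be reproved. Since every $\phi_{ij}$ is additive, it suffices to check both axioms on homogeneous generators of the form $\inv_{\nu_i}(f)$ with $f \in \K[x]$.

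The identity condition is immediate: for any $f \in \K[x]$ we have $\nu_i(f) = \nu_i(f)$, hence $\phi_{ii}(\inv_{\nu_i}(f)) = \inv_{\nu_i}(f)$ by the first branch of the definition.

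For the compatibility condition, fix $i \leq j \leq k$ and $f \in \K[x]$. From $\nu_i \leq \nu_j \leq \nu_k$ in $\mathcal{V}$ we get the chain $\nu_i(f) \leq \nu_j(f) \leq \nu_k(f)$, and the proof splits into three cases depending on where equalities occur. If $\nu_i(f) = \nu_j(f) = \nu_k(f)$, both $\phi_{jk} \circ \phi_{ij}$ and $\phi_{ik}$ send $\inv_{\nu_i}(f)$ to $\inv_{\nu_k}(f)$. If $\nu_i(f) = \nu_j(f) < \nu_k(f)$, then $\phi_{ij}(\inv_{\nu_i}(f)) = \inv_{\nu_j}(f)$ and applying $\phi_{jk}$ gives $0$, while $\phi_{ik}(\inv_{\nu_i}(f)) = 0$ directly since $\nu_i(f) < \nu_k(f)$. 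Finally, if $\nu_i(f) < \nu_j(f)$, then $\phi_{ij}(\inv_{\nu_i}(f)) = 0$ already, so $(\phi_{jk} \circ \phi_{ij})(\inv_{\nu_i}(f)) = 0 = \phi_{ik}(\inv_{\nu_i}(f))$, the latter equality following again from $\nu_i(f) < \nu_k(f)$. In every case the two maps agree on $\inv_{\nu_i}(f)$, and additivity extends the equality to all of $\mathcal{G}_{\nu_i}$.

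There is not really a significant obstacle here: the content of the statement is bookkeeping, and the only point that required genuine work, the well-definedness of $\phi_{ij}$ on homogeneous pieces (so that the value of $\phi_{ij}(\inv_{\nu_i}(f))$ depends only on the class of $f$ modulo $\mathcal{P}_{\nu_i(f)}^{+}(\K[x], \nu_i)$ and not on the representative $f$), has already been absorbed into the cited result from \cite{Andrei}. The mildly subtle aspect, if any, is making sure in the case analysis that one does not accidentally use $\inv_{\nu_j}(f)$ when $\nu_i(f) < \nu_j(f)$, which is why I would organize the cases by where the first strict inequality in $\nu_i(f) \leq \nu_j(f) \leq \nu_k(f)$ appears, as above.
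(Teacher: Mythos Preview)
Your proof is correct and follows essentially the same approach as the paper: verify the identity axiom directly from the definition, then check compatibility on homogeneous generators by case analysis on where strict inequalities occur in $\nu_i(f)\leq\nu_j(f)\leq\nu_k(f)$. The paper merely collapses your last two cases into a single ``some inequality is strict'' case, which is a cosmetic difference.
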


\begin{proof}We need to check  that $\phi_{ii}$ is the identity map and $\phi_{ik}=\phi_{jk}\circ \phi_{ij}$ for all $i\leq j \leq k$.
\begin{itemize}
\item By definition, $\phi_{ii}(\inv_{\nu_i}(f)) = \inv_{\nu_i}(f)$ for every $f\in \K[x]$, hence $\phi_{ii}$ is the identity  map on $\mathcal{G}_{\nu_i}$.

 \item Take $i\leq j \leq k$,
 %$\nu_i\preceq \nu_j \preceq \nu_k$,
  that is,  
 \begin{equation}\label{eqnuinujnuk}
 \nu_i(f)\leq \nu_{j}(f)\leq \nu_{k}(f)
 \end{equation} for  all $f\in \K[x]$.
If  the strict inequality holds in some of the inequalities of \eqref{eqnuinujnuk}, then 
$$(\phi_{jk}\circ \phi_{ij}) (\inv_{\nu_i}(f)) = 0 = \phi_{ik}(\inv_{\nu_i}(f)). $$
If $\nu_i(f)=\nu_{j}(f)= \nu_{k}(f)$, then
 $$(\phi_{jk}\circ \phi_{ij}) (\inv_{\nu_i}(f)) = \inv_{\nu_k}(f) = \phi_{ik}(\inv_{\nu_i}(f)). $$
% 
% If $\nu_i(f)<\nu_{j}(f)\leq \nu_{k}(f)$, then 
% \begin{align*}
% (\phi_{jk}\circ \phi_{ij}) (\inv_{\nu_i}(f))&= \phi_{jk}(\phi_{ij}(\inv_{\nu_i}(f)))\\
% &= \phi_{jk}(0)\\
% &=0\\
% &=\phi_{ik}(\inv_{\nu_i}(f)).
% \end{align*}
%
% If $\nu_i(f)=\nu_{j}(f)< \nu_{k}(f)$, then 
%  \begin{align*}
% (\phi_{jk}\circ \phi_{ij}) (\inv_{\nu_i}(f))&= \phi_{jk}(\phi_{ij}(\inv_{\nu_i}(f)))\\
% &=  \phi_{jk}(\inv_{\nu_j}(f))\\
% &=0\\
% &= \phi_{ik}(\inv_{\nu_i}(f)).
% \end{align*}
%
%If $\nu_i(f)=\nu_{j}(f)= \nu_{k}(f)$, then
%  \begin{align*}
% (\phi_{jk}\circ \phi_{ij}) (\inv_{\nu_i}(f))&= \phi_{jk}(\phi_{ij}(\inv_{\nu_i}(f)))\\
% &=  \phi_{jk}(\inv_{\nu_j}(f))\\
% &=\inv_{\nu_k}(f)\\
% &= \phi_{ik}(\inv_{\nu_i}(f)).
% \end{align*} 
Hence, $\phi_{ik}=\phi_{jk}\circ \phi_{ij}$ for all $i\leq j \leq k$.
\end{itemize}

Therefore, 
%$\phi_{ik}=\phi_{jk}\circ \phi_{ij}$ for all $i\leq j \leq k$.
% $\nu_i\preceq \nu_j \preceq \nu_k$.
%  Then 
   $\{( \mathcal{G}_{\nu_i}, \phi_{ij})\}^{i,j\in I }_{i\leq j}$ is a direct system over $I$.

\end{proof}

\begin{Obs} We do not gain in generality if we suppose $\mathfrak{v}= \{\nu_i\}_{i\in I}$ simply a directed set, because every directed set in $\mathcal{V}$ is totally ordered. Indeed, if $\mathfrak{v} \subset \mathcal{V}$ is a directed set, then given $\nu, \mu\in \mathfrak{v}$ there exists $\eta\in \mathfrak{v}$ such that $\nu\leq \eta$ and $\mu \leq \eta$. By Theorem 2.4 of \cite{Nart}, the set $(-\infty, \eta) = \{\rho\in \mathcal{V}\mid \rho< \eta \}$ is totally ordered.
%well ordered.
 Therefore, $\nu$ and $\mu$ are comparable.

\end{Obs}

\vspace{0.2cm} 

We want to describe the direct limit of the direct system $\{( \mathcal{G}_{\nu_i}, \phi_{ij})\}^{i,j\in I }_{i\leq j}$. 
We are going to use the characterization  of $\underset{\longrightarrow}{\lim\,}\mathcal{G}_{\nu_i}$ as a disjoint union.
The direct limit of the direct system $\{( \mathcal{G}_{\nu_i}, \phi_{ij})\}^{i,j\in I }_{i\leq j}$ is defined as
$$\underset{\longrightarrow}{\lim\,}\mathcal{G}_{\nu_i}:= \bigslant{\displaystyle\bigsqcup_{i\in I}\mathcal{G}_{\nu_i}}{\sim}, $$
where $\sim$ is the following equivalence relation: for $a_i\in \mathcal{G}_{\nu_i}$ and $a_j\in \mathcal{G}_{\nu_j}$ with $i\leq j$, 
\begin{equation*}\label{relacaoequiv}
a_i\sim a_j \Longleftrightarrow  \phi_{ij}(a_i)=a_j.
\end{equation*}

We denote by $[a_i]$ the equivalence class of $a_i$ in $\underset{\longrightarrow}{\lim\,}\mathcal{G}_{\nu_i}$. The operations on $\underset{\longrightarrow}{\lim\,}\mathcal{G}_{\nu_i}$ are induced from each $\mathcal{G}_{\nu_i}$.
%\footnote{For $a_i\in \mathcal{G}_{\nu_i}$ and $a_j\in \mathcal{G}_{\nu_j}$, take $k\in I$ such that $i,j\preceq k$ and define \linebreak
%$[a_i]+[a_j]: =  [ \phi_{ik}(a_i)+\phi_{jk}(a_j)] $ and $ [a_i]\cdot [a_j]: = [ \phi_{ik}(a_i)\phi_{jk}(a_j)]. $}
 Denoting by $0_i$ the additive identity of $\mathcal{G}_{\nu_i}$, it is easy to see that $[0_i]=[0_j]$ for all $i,j\in I$. We write only $[0]$ to denote $[0_i]$, which is the additive identity of $\underset{\longrightarrow}{\lim\,}\mathcal{G}_{\nu_i}$. Similarly, if $\inv_{\nu_i}(1)$  is the  multiplicative identity of $\mathcal{G}_{\nu_i}$, it is easy to see that $[\inv_{\nu_i}(1)]=[\inv_{\nu_j}(1)]$ for all $i,j\in I$. The equivalence class $[\inv_{\nu_i}(1)]$ is the multiplicative identity of $\underset{\longrightarrow}{\lim\,}\mathcal{G}_{\nu_i}$.  We also have the following.
 
 \vspace{0.2cm} 

\begin{Lema}\label{lemClassLimite}For   fixed $k\in I$ and $f\in \K[x]$, consider $[\inv_{\nu_k}(f)]\in \underset{\longrightarrow}{\lim\,}\mathcal{G}_{\nu_i}$. 
\begin{enumerate}
\item  We have $\nu_k(f)<\nu_j(f)$ for some $j>k$ if and only if $[\inv_{\nu_k}(f)]=[0]$.

\item  For $j\geq k$, if $\nu_k(f)=\nu_j(f)$, then $[\inv_{\nu_k}(f)]=[\inv_{\nu_j}(f)]$. Moreover, if $\nu_k(f)=\nu_j(f)$ for every $j\geq k$, then $[\inv_{\nu_k}(f)]\neq [0]$.
\end{enumerate}

\end{Lema}

\begin{proof}$\,$

\begin{enumerate}

\item If $\nu_k(f)<\nu_j(f)$ for some $j>k$, then 
$$\phi_{kj}(\inv_{\nu_k}(f)) = 0_j. $$
Hence, $[\inv_{\nu_k}(f)]=[0]$. On the other hand, if $[\inv_{\nu_k}(f)]=[0]$, then there exists  $j\geq k$ such that $$\phi_{kj}(\inv_{\nu_k}(f)) =0_j.$$ By the definition of $\phi_{kj}$, this implies $\nu_k(f)<\nu_j(f)$ and $k<j$. 

\item  For $j\geq k$, if $\nu_k(f)=\nu_j(f)$, then
$$\phi_{kj}(\inv_{\nu_k}(f)) =  \inv_{\nu_j}(f). $$
Hence, we have $[\inv_{\nu_k}(f)]=[\inv_{\nu_j}(f)]$. Moreover, if $\nu_k(f)=\nu_j(f)$ for every $j\geq k$, then by the preceding item we have $[\inv_{\nu_k}(f)]\neq [0]$.

\end{enumerate}
\end{proof}

Let $\mathfrak{v}= \{\nu_i\}_{i\in I}$ be a totally ordered subset of $\mathcal{V}$. For every $f\in \K[x]$, we say that $f$ is $\boldsymbol{\mathfrak{v}}$\textbf{-stable} if there exists $i_f\in I$ such that 
\begin{equation}\label{eqVStable}
\nu_i(f)=\nu_{i_f}(f) \text{ for every } i\in I \text{ with } i\geq i_f.
\end{equation}

In the following, let $\mathfrak{v}= \{\nu_i\}_{i\in I}$ be a totally ordered subset of $\mathcal{V}$ and suppose there exists $\nu\in \mathcal{V}$ such that $\nu_i\leq \nu$ for every $i\in I$.
\vspace{0.2cm} 

\begin{Lema}\label{lemMenorIgualMu}  Fix $k\in I$. We have the following. 
\begin{enumerate}

\item If $\nu_k(f)<\nu(f)$, then $\nu_k(f)<\nu_j(f)$ for every $j>k$.

\vspace{0.1cm} 

\item We have $\nu_k(f) = \nu(f)$  if and only if $\nu_k(f)=\nu_j(f)$ for every $j\geq k$. In other words, $f$ is $\mathfrak{v}$-stable if and only if 
 $\nu_k(f)=\nu(f)$ for some $k\in I$.
\end{enumerate}
\end{Lema}

\begin{proof}$\,$
\begin{enumerate}

\item  If there exists $j> k$ such that $\nu_k(f)=\nu_j(f)$, then  $\nu_k(f)=\nu_j(f)<\nu(f)$ and $\nu_k<\nu_j<\nu$, contradicting Proposition~\ref{prop3valMenor}.

\vspace{0.1cm} 

\item Suppose $\nu_k(f)=\nu(f)$. If  there exists $j>k$ such that $\nu_k(f)<\nu_j(f)$, then  $\nu(f)=\nu_k(f)<\nu_j(f)$, which is a contradiction.  Conversely, suppose $\nu_k(f)=\nu_j(f)$ for every $j\geq k$. If $\nu_k(f)<\nu(f)$, then by the preceding item we would have $\nu_k(f)<\nu_j(f)$ for every $j>k$, contradicting our assumption. 

\end{enumerate}
\end{proof}

 Let $S\subset  \mathcal{G}_{\nu} $ be any subset. We denote by $\langle S \rangle$ the additive subgroup of  $\mathcal{G}_{\nu}$  generated by $S$. We define
$$R = \langle \{ \inv_\nu(f)\mid f \text{ is } \mathfrak{v}\text{-stable} \} \rangle\subseteq  \mathcal{G}_{\nu}. $$

\begin{Lema}\label{lemR}  We have the following. 
\begin{enumerate}

\item $R$ is a subring of $\mathcal{G}_{\nu}$.

\vspace{0.1cm} 

\item Let $d\in \N\cup\{\infty\}$ be the smallest positive integer for which some polynomial of degree is not $\mathfrak{v}$-stable (if there is no such polynomial, set $d=\infty$). Then 
$$R = \langle \{ \inv_\nu(f)\mid f \in \K[x]_d \} \rangle.$$

\end{enumerate}
\end{Lema}

\begin{proof}$\,$
\begin{enumerate}

\item  By construction, $R$ is an additive subgroup of $\mathcal{G}_{\nu}$ and clearly $\inv_\nu(1)\in R$. For $\inv_\nu(f), \inv_\nu(g)\in R$,  we take $k = \max\{  i_f, i_g\}$. Then, for every $j\geq k$, it follows that
$$\nu_j(fg) = \nu_j(f)+\nu_j(g) = \nu_k(f)+\nu_k(g) = \nu_k(fg). $$
That is, $fg$ is $\mathfrak{v}$-stable and then $\inv_\nu(fg)\in R$. This shows that $R$ is a subring of $\mathcal{G}_{\nu}$. 

\vspace{0.1cm} 

\item If $d=\infty$, then $K[x]_d=K[x]$ and every $f$ is $\mathfrak{v}$-stable. Hence, the result follows.  
Suppose $d<\infty$ and take $Q\in \K[x]$ not $\mathfrak{v}$-stable and $\deg(Q)=d$. We denote $R'=\langle \{ \inv_\nu(f)\mid f \in \K[x]_d \} \rangle.$
 We have $R'\subseteq R$ since $\deg(f)<d$ implies $f$ is $\mathfrak{v}$-stable (by the minimality of $d=\deg(Q)$).

Consider $\inv_\nu(f)\in R$, hence $f$ is $\mathfrak{v}$-stable. Let $f = gQ+f_0$ the euclidean division of $f$ by $Q$. Thus, $\deg(f_0)<d$ and $f_0$ is also $\mathfrak{v}$-stable. By Lemma~\ref{lemMenorIgualMu} and the fact that $Q$ is not $\mathfrak{v}$-stable, we have that $\nu(Q)>\nu_i(Q)$ for every $i\in I$.
% We first note that $gQ$ is not $\mathfrak{v}$-stable, since $\mu(gQ)=\mu(g)+\mu(Q)>\nu_i(gQ)$ for every $i\in I.$
%  Indeed, if $gQ$ was $\mathfrak{v}$-stable, then by Lemma~\ref{lemMenorIgualMu} (2) we would have, for some $i\in I$, $\nu_i(gQ) = \mu(gQ)$, that is, $\nu_i(g)+\nu_i(Q) = \mu(g)+\mu(Q)$. However, we know that $\nu_i(Q)<\mu(Q)$ and $\nu_i(g)\leq \mu(g)$, implying that $\nu_i(g)+\nu_i(Q) < \mu(g)+\mu(Q)$, which is a contradiction. Therefore, $gQ$ is not $\mathfrak{v}$-stable.
Take $k = \max\{i_f, i_{f_0}\}$. Hence, 
$\nu_k(f) = \nu_j(f) \text{ and }  \nu_k(f_0) = \nu_j(f_0) $
for every $j\geq k$. 
%We claim that $\lambda_1= \lambda_2$. Indeed, suppose $\lambda_1<\lambda_2$. Then,
%$$\nu_j(gQ) = \nu_j(f-f_0) = \min\{\lambda_1, \lambda_2\} = \lambda_1 $$
%for every $j\geq i$. However, this contradicts $gQ$ being not $\mathfrak{v}$-stable. 
 By Lemma~\ref{lemMenorIgualMu},  
$\nu(f) =  \nu_k(f) $ and $\nu(f_0)=\nu_k(f_0)$. 
Thus,
\begin{align*}
\nu(f-f_0)&=\nu(gQ)\\
&>\nu_k(gQ)\\
& \geq \min\{ \nu_k(f), \nu_k(f_0)\}\\
& =\min\{ \nu(f), \nu(f_0)\}.
\end{align*}
% 
% $$\mu(gQ)> \nu_j(gQ) \text{ and }\mu(f) = \nu_j(f)= \nu_j(f_0)= \mu(f_0).$$
%Thus, for $j >i$, we have
%$$\mu(f-f_0)=\mu(gQ)>\nu_j(gQ)>\nu_i(gQ) \geq \min\{ \nu_i(f), \nu_i(f_0)\}=\min\{ \mu(f), \mu(f_0)\}.$$
Hence, $\nu(f-f_0)>\nu(f) = \nu(f_0)$, that is, $\inv_{\nu}(f) = \inv_{\nu}(f_0)\in R'$. Therefore, $R=R'$.
\end{enumerate}
\end{proof}

Now we present our main result. 
%Our main theorem states that  the direct limit $\underset{\longrightarrow}{\lim\,}\mathcal{G}_{\nu_i}$ is isomorphic to a subgroup of $\mathcal{G}_{\nu}$ generated by a subset determined by $\mathfrak{v}$-stable elements.  

\vspace{0.2cm}

\begin{Teo}\label{teoLimVmaiorRstable}
%Let $\mathfrak{v}= \{\nu_i\}_{i\in I}$ be a totally ordered set in $\mathcal{V}$ and suppose there exists $\nu\in \mathcal{V}$ such that $\nu_i\leq \nu$ for every $i\in I$.
%%\begin{enumerate}
%%\item $\nu_i\leq \nu$ for every $i\in I$ and
%%
%%\item   if $f\in \K[x]$ is $\mathfrak{v}$-stable, then $\nu_{i_f}(f) = \nu(f)$.
%%\end{enumerate}
%Consider
%$$R = \langle \{ \inv_\nu(f)\mid f \text{ is } \mathfrak{v}\text{-stable} \} \rangle\subseteq  \mathcal{G}_{\nu}. $$
%Then,
% $R$ is a subring of $\mathcal{G}_{\nu}$ and 
We have $$\underset{\longrightarrow}{\lim\,}\mathcal{G}_{\nu_i}\cong R.$$

\end{Teo}

\begin{proof}

 Consider the map given by
\begin{align*}
\tau: \hspace{0.5cm} R\hspace{0.3cm} &\longrightarrow \underset{\longrightarrow}{\lim\,}\mathcal{G}_{\nu_i}\\
\inv_{\nu}(f) & \longmapsto [\inv_{\nu_{i_f}}(f)],
%0& \longmapsto [0],
\end{align*}
where we extend $\tau$ naturally to arbitrary (that is, not necessarily homogeneous) elements of $R$ by additivity. This map is well defined. Indeed,
% Lemma~\ref{lemClassLimite} (1),  $\tau(0)=[0]$ is well-defined. Now, 
  take $\inv_\nu(f)\in R$. By assumption, $f$ is $\mathfrak{v}$-stable, so there exists $i_f\in I$ such that \eqref{eqVStable} is satisfied. If $j_f$ is another index such that $\nu_i(f)=\nu_{j_f}(f)$ for every $i\in I$ with $i\geq j_f$, then without loss of generality we can take $i_f\leq j_f$.  Hence, by Lemma~\ref{lemClassLimite} (2), $[\inv_{\nu_{i_f}}(f)] =[\inv_{\nu_{j_f}}(f)]$.
  
  \vspace{0.1cm} 

Also, suppose $f,g\in \K[x]$ are such that $\inv_\nu(f)=\inv_\nu(g)\in R$. By Lemma~\ref{lemPropinv} (3), this means that $\nu(f-g)>\nu(f)=\nu(g)$. By Lemma~\ref{lemR} (2), %Since $R = \langle \{ \inv_\nu(f)\mid f \in \K[x]_d \} \rangle$, 
we can consider $f,g\in \K[x]_d$. Hence, $f-g\in \K[x]_d$ and $f,g$ and $f-g$ are $\mathfrak{v}$-stable. Take $k=\max\{i_f, i_g, i_{f-g}\}$. By Lemma~\ref{lemMenorIgualMu}, $$\nu_k(f)=\nu(f),\,\nu_k(g)=\nu(g) \text{ and } \nu_k(f-g)=\nu(f-g). $$
Then, 
 \begin{align*}\nu_k(f-g)&=\nu(f-g)\\
 & > \min\{\nu(f), \nu(g) \} \\
 & = \min\{\nu_k(f), \nu_k(g) \}.  
 \end{align*}
We conclude that $\inv_{\nu_k}(f)=\inv_{\nu_k}(g)$. Therefore, using Lemma~\ref{lemClassLimite} we have
$$\tau(\inv_\nu(f)) = [\inv_{\nu_{i_f}}(f) ] =  [\inv_{\nu_{k}}(f) ]=[\inv_{\nu_{k}}(g) ] = [\inv_{\nu_{i_g}}(g) ]=\tau(\inv_\nu(g))$$
and we see that
 $\tau$ is well defined.

  Since we extended $\tau$ to  arbitrary elements of $R$ via finite sums, this map is a group homomorphism by construction. We now check that $\tau$ is a ring isomorphism. 

\begin{itemize}
\item $\tau$ is injective: since it is a group homomorphism, it is enough to check that $\ker(\tau)=\{0\}$.  Given a non-zero element $\inv_\nu(f)\in R$, we know that $\nu_{i_f}(f)= \nu_j(f)$ for every $j\geq i_f$. By Lemma~\ref{lemClassLimite} (2),  $\tau(\inv_\nu(f)) = [\inv_{\nu_{i_f}}(f)] \neq  [0]$. Hence, $\ker(\tau)=\{0\}$ and  $\tau$ is injective.

\vspace{0.1cm}

\item $\tau$ is surjective: take any $[\inv_{\nu_k}(f)]\in \underset{\longrightarrow}{\lim\,}\mathcal{G}_{\nu_i}$. If there exists $j>k$ such that $\nu_k(f)<\nu_j(f)$, then by Lemma~\ref{lemClassLimite} (1) we have $[\inv_{\nu_{i_f}}(f)] = [0]=\tau(0)$. On the other hand, if $\nu_k(f) =\nu_j(f)$ for every $j\geq k$, then we can take $i_f=k$ and $[\inv_{\nu_k}(f)] = [\inv_{\nu_{i_f}}(f)] = \tau(\inv_\nu(f))$. Therefore, $\tau$ is surjective. 

\vspace{0.1cm} 

\item $\tau$ is a ring homomorphism: for any $\inv_\nu(f), \inv_\nu(g)\in R$, we can take $j\in I$ sufficiently large so that $j\geq \max\{i_{fg}, i_f, i_g\}$. We have
% Then, 
%%\begin{align*}
%%\tau(\inv_\nu(f) \cdot \inv_\nu(g) ) & = \tau(\inv_\nu(fg))\\
%%									 & = [\inv_{\nu_{i_{fg}}}(fg)]\\
%%									 & = [\inv_{\nu_{i_{fg}}}(f)\cdot \inv_{\nu_{i_{fg}}}(g)]\\
%%									 & = [\inv_{\nu_{i_{fg}}}(f)]\cdot [\inv_{\nu_{i_{fg}}}(g)]\\
%%									 & = [\inv_{\nu_{i_{fg}}}(f)]\cdot [\inv_{\nu_{i_{fg}}}(g)]
%%\end{align*}
\begin{align*}
\tau(\inv_\nu(f) \cdot \inv_\nu(g) ) & = \tau(\inv_\nu(fg))\\
									 & = [\inv_{\nu_{j}}(fg)]\\
									 & = [\inv_{\nu_{j}}(f)\cdot \inv_{\nu_{j}}(g)]\\
									 & = [\inv_{\nu_{j}}(f)]\cdot [\inv_{\nu_{j}}(g)]\\					
									 & = \tau(\inv_\nu(f))\cdot \tau(\inv_\nu(g)).
\end{align*}
Also, $\tau$ preserves the multiplicative identity since, by definition, $\tau(\inv_\nu(1)) = [\inv_{\nu_i}(1)]$ (for any $i\in I$), which is the unity of $\underset{\longrightarrow}{\lim}\mathcal{G}_{\nu_i}$.
%
%the unity on  $\mathcal{G}_{\nu}$ is $\inv_\nu(1)$. By definition, $\tau(\inv_\nu(1)) = [\inv_{\nu_i}(1)]$ (for any $i\in I$), which is the unity of $\underset{\longrightarrow}{\lim}\mathcal{G}_{\nu_i}$. 
%
%
% by the preceding items, $\tau$ is a surjective ring homomorphism and we know that $\mathcal{G}_{\nu}$ has the unity $\inv_\nu(1)$.  Then, $\tau(\inv_\nu(1)) = [\inv_{\nu_i}(1)]$ (for any $i\in I$) preserves the unity.
\end{itemize}

Therefore, we have $\underset{\longrightarrow}{\lim\,}\mathcal{G}_{\nu_i}\cong R$  
%as abelian groups. Also, $\underset{\longrightarrow}{\lim}\mathcal{G}_Q\cong R_{Q_n}$ 
as commutative rings with unity.
  
\end{proof}

% In order to describe $\underset{\longrightarrow}{\lim\,}\mathcal{G}_{\nu_i}$, 
We will classify the totally ordered subsets $ \mathfrak{v}\subset \mathcal{V}$ in three classes using the following proposition.
% 
%  and, in each one, find a ring $R$ from  $\mathfrak{v}$ which satisfies the universal properties of the direct limit. That is, $R$ will satisfy:
%
%\begin{description}
%\item[Condition (1)]   $\,$
%
%\noindent There is a family of homomorphisms $\{\phi_i\}_{i\in I}$ with $\phi_i: \mathcal{G}_{\nu_i} \rightarrow R$ for every $\nu_i\in \mathfrak{v}$ satisfying $\phi_i=\phi_j \circ \phi_{ij}$ whenever $\nu_i\leq \nu_j$.
%
%
%\item[Condition (2)] $\,$
%
% \noindent Suppose there exists another abelian group $B$ such that there are homomorphisms $\psi_i: \mathcal{G}_{\nu_i} \rightarrow B$ for each $\nu_i\in \mathfrak{v}$ and $\psi_i = \psi_{j}\circ \phi_{ij}$ whenever $\nu_i\leq \nu_j$. Then, there exists a unique homomorphism $\psi:  R \rightarrow B$ such that $\psi_i = \psi\circ \phi_i$. 
%\end{description} 
%
%In this situation, the uniqueness of  the direct limit  up to isomorphisms will guarantee there is a unique isomorphism $\tau: \underset{\longrightarrow}{\lim}\mathcal{G}_{\nu_i} \rightarrow R$
% such that $\phi_{i}^{'} = \tau\circ \phi_i$, where $\phi_{i}^{'}: \mathcal{G}_{\nu_i} \rightarrow \underset{\longrightarrow}{\lim}\mathcal{G}_{\nu_i}$ is such that $\phi_{i}^{'} = \phi_{j}^{'}\circ \phi_{ij}$ whenever $\nu_i\leq \nu_j$.
%
%\vspace{0.2cm}

%We will classify the totally ordered subsets $ \mathfrak{v}\subset \mathcal{V}$ using the following propositions.

%\begin{Prop}\label{prop3valMenor} (Proposition 2.2 of \cite{novbarnabe}) Assume $\eta, \nu, \mu\in \mathcal{V}$ are such that \linebreak  $\eta<\nu<\mu$. For $f\in \K[x]$, if $\eta(f)=\nu(f)$, then $\nu(f)=\mu(f)$.
%\end{Prop}

\vspace{0.2cm}

\begin{Prop} (Corollary 2.3 of \cite{novbarnabe}) Let $\{\nu_i\}_{i\in I}$ be a totally ordered set in $\mathcal{V}$. For every $f\in \K[x]$, either $\{\nu_i(f)\}_{i\in I}$ is strictly increasing, or there exists $i_0\in I$ such that $\nu_i(f)=\nu_{i_0}(f)$ for every $i\in I$ with $i\geq i_0$.  
\end{Prop}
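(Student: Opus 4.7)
The plan is to deduce this corollary from the immediately preceding Proposition~2.2 of \cite{novbarnabe} (stated just above), which says that if $\eta<\nu<\mu$ in $\mathcal{V}$ and $\eta(f)=\nu(f)$, then $\nu(f)=\mu(f)$. Morally, once the sequence $\{\nu_i(f)\}_{i\in I}$ fails to strictly increase across a single step, it must stay constant from then on; the proposition is exactly the tool that propagates a single equality past the offending index all the way up the chain.

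Concretely, fix $f\in \K[x]$ and suppose $\{\nu_i(f)\}_{i\in I}$ is not strictly increasing. By the definition of the order on $\mathfrak{v}$ recalled in Section~\ref{TotallyOrderLimits}, for any $i<j$ we always have $\nu_i(f)\leq \nu_j(f)$, so the failure of strict monotonicity means there exist $i_1,i_2\in I$ with $i_1<i_2$ and $\nu_{i_1}(f)=\nu_{i_2}(f)$. I would set $i_0:=i_2$ and claim that $\nu_i(f)=\nu_{i_0}(f)$ for every $i\geq i_0$.

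For $i=i_0$ there is nothing to show. For $i>i_0$ we have the strict chain $\nu_{i_1}<\nu_{i_0}<\nu_i$ in $\mathcal{V}$ (recall that $i<j$ in $I$ implies $\nu_i<\nu_j$ as valuations, as stated in Section~\ref{TotallyOrderLimits}). Applying Proposition~2.2 of \cite{novbarnabe} with $\eta=\nu_{i_1}$, $\nu=\nu_{i_0}$ and $\mu=\nu_i$, together with the hypothesis $\eta(f)=\nu_{i_1}(f)=\nu_{i_2}(f)=\nu(f)$, yields $\nu_{i_0}(f)=\nu(f)=\mu(f)=\nu_i(f)$, which is exactly the required stabilization.

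The argument has no real obstacle: it is a direct two-line reduction to Proposition~2.2. The only slight care is to note that "not strictly increasing" in a chain indexed by a totally ordered set where $i<j\Rightarrow \nu_i(f)\leq \nu_j(f)$ forces an actual equality between two values $\nu_{i_1}(f)=\nu_{i_2}(f)$, so that Proposition~2.2 can be invoked with the correct hypothesis.
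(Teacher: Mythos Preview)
Your argument is correct and is precisely the intended derivation: the paper does not supply its own proof of this statement but simply cites it as Corollary~2.3 of \cite{novbarnabe}, immediately following Proposition~2.2 of \cite{novbarnabe} (Proposition~\ref{prop3valMenor} here), and your reduction to that proposition is exactly how the corollary is meant to be obtained. There is nothing to add.
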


\vspace{0.2cm} 

We consider three cases:

\begin{itemize}
\item $\mathfrak{v}$ has maximum $\nu_m$.

\item $\mathfrak{v}$ has no maximum and every $f\in \K[x]$ is $\mathfrak{v}$-stable.

\item $\mathfrak{v}$ has no maximum and there exists $q\in \K[x]$  not $\mathfrak{v}$-stable.
\end{itemize}

%\pagebreak

\subsection{First and second cases}\label{FirstSecondCases}

\begin{Prop}\label{lemVstable}Let $\mathfrak{v}= \{\nu_i\}_{i\in I}$ be a totally ordered set in $\mathcal{V}$
 such that 
 %$\mathfrak{v}$ has no maximal element and
  every $f\in \K[x]$ is $\mathfrak{v}$-stable. 
Define $\nu = \displaystyle\sup_{i\in I}\nu_i: \K[x] \rightarrow \Gamma_\infty$ by $\nu(f) = \nu_{i_f}(f)$.  
Then $\nu$ is a valuation on $\K[x]$ such that $\nu_i\leq \nu$. Moreover, if $\nu'\in \mathcal{V}$ is such that $\nu'\leq \nu$ and $\nu_i\leq \nu'$ for every $i\in I$, then  $\nu'=\nu$. 

\end{Prop}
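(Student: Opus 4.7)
The plan is to verify the valuation axioms directly from the $\mathfrak{v}$-stability hypothesis, using the fact that $I$ is totally ordered so finitely many stability indices can be dominated by a single one. The heart of the argument is that for any finite set of polynomials $f_1,\ldots,f_n$, choosing $j \geq \max\{i_{f_1},\ldots,i_{f_n}\}$ gives $\nu(f_k) = \nu_j(f_k)$ for all $k$, reducing each identity to an identity for $\nu_j$.

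First I would check that $\nu$ is well-defined: if $i_f$ and $j_f$ both satisfy the stability condition \eqref{eqVStable}, pick any $k \geq \max\{i_f,j_f\}$, so $\nu_{i_f}(f) = \nu_k(f) = \nu_{j_f}(f)$, and the value $\nu(f)$ does not depend on the choice. Then I would verify (V1)--(V3). For (V1), given $f,g \in \K[x]$, choose $j \geq \max\{i_{fg}, i_f, i_g\}$, so that $\nu(fg) = \nu_j(fg) = \nu_j(f) + \nu_j(g) = \nu(f) + \nu(g)$. For (V2), with the same choice, $\nu(f+g) = \nu_j(f+g) \geq \min\{\nu_j(f), \nu_j(g)\} = \min\{\nu(f), \nu(g)\}$. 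Finally (V3) is immediate since each $\nu_i$ satisfies $\nu_i(0) = \infty$ and $\nu_i(1) = 0$.

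Next I would show $\nu_i \leq \nu$ coordinatewise: for any $f$ and any $i \in I$, pick $k \geq \max\{i, i_f\}$; since the total order on $\mathfrak{v}$ gives $\nu_i(f) \leq \nu_k(f)$ and $\nu_k(f) = \nu(f)$ by stability, we conclude $\nu_i(f) \leq \nu(f)$. This also places $\nu$ in $\mathcal{V}$ as an extension of $\nu_0$.

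For the uniqueness statement, suppose $\nu' \in \mathcal{V}$ satisfies $\nu_i \leq \nu' \leq \nu$ for every $i \in I$. For an arbitrary $f \in \K[x]$, specializing the inequality to $i = i_f$ yields
\[
\nu_{i_f}(f) \;\leq\; \nu'(f) \;\leq\; \nu(f) \;=\; \nu_{i_f}(f),
\]
so $\nu'(f) = \nu(f)$, and hence $\nu' = \nu$. There is no real obstacle here; the work is entirely absorbed by the $\mathfrak{v}$-stability hypothesis, which guarantees that value computations for any finite collection of polynomials can be performed inside a single $\nu_j$.
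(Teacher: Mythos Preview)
Your approach is essentially identical to the paper's: pick a common index dominating all the relevant stability indices and reduce each axiom to the corresponding property of a single $\nu_j$. One small slip: in (V2) you write ``with the same choice'' referring to $j \geq \max\{i_{fg}, i_f, i_g\}$, but to conclude $\nu(f+g) = \nu_j(f+g)$ you also need $j \geq i_{f+g}$; the paper simply takes $j = \max\{i_f, i_g, i_{f+g}, i_{fg}\}$ from the outset, which fixes this and matches the general principle you stated in your opening paragraph.
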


\begin{proof}
Take $f,g\in \K[x]$. Then, by assumption, there exist $i_f,\, i_g,\, i_{f+g}$ and $i_{fg}$ satisfying (\ref{eqVStable}). Take $j=\max\{i_f,\, i_g,\, i_{f+g}, \,i_{fg}\}$. Hence, $$\nu(fg)=\nu_{j}(fg)=\nu_{j}(f)+ \nu_{j}(g) = \nu(f)+\nu(g)$$
and
$$\nu(f+g) = \nu_{j}(f+g)\geq \min\{ \nu_{j}(f), \nu_{j}(g)\}=\min\{ \nu(f), \nu(g)\}. $$
Also, $\nu(0)=\nu_{i_0}(0)=\infty$ and $\nu(1)=\nu_{i_1}(1)=0$. Therefore, $\nu$ is a valuation on $\K[x]$. In addition, for each $f\in \K[x]$ and $i\in I$, we have $\nu_i(f)\leq \nu_{i_f}(f)=\nu(f)$. Hence, $\nu_i\leq \nu$. 

Moreover, suppose $\nu'\in  \mathcal{V}$ is such that $\nu'\leq \nu$ and $\nu_i\leq \nu'$ for every $i\in I$. Thus, for every $f\in \K[x]$ we have 
$$\nu(f)\geq \nu'(f)\geq \nu_{i_f}(f)=\nu(f). $$
Therefore, $\nu'=\nu$.
%
%
%separar em $\{\nu_i(f)\}_{i\in I}$ em constant e ultimately constant

\end{proof}

%Let $\mathfrak{v}= \{\nu_i\}_{i\in I}$ be a totally ordered set in $\mathcal{V}$ such that $\mathfrak{v}$ has no maximal element and every $f\in \K[x]$ is $\mathfrak{v}$-stable. 
%Define $\nu: \K[x] \rightarrow \Gamma_\infty$ by $\nu(f) = \nu_{i_f}(f)$.  

%\textcolor{red}{pra mim não faz sentido provar o teorema abaixo considerando $\mathfrak{v}\cup \{\nu\}$ pois não teriamos $\mathcal{G}_\nu$ descrito em função dos  $\mathcal{G}_{\nu_i}$ apenas. Estaríamos adicionando o limite no conjunto que vai descrever o limite.  }

\begin{Obs}If $\mathfrak{v}= \{\nu_i\}_{i\in I}$ has maximum $\nu_{m}$, then every $f$ is $\mathfrak{v}$-stable (take $i_f=m$). Hence, $\nu$ in Proposition~\ref{lemVstable} coincides with $\nu_{m}$. 
\end{Obs}

\vspace{0.2cm} 

We have the following corollary, which covers the first and second cases.

\vspace{0.2cm} 

\begin{Cor}\label{teoLimitSeqStable} Let $\mathfrak{v}= \{\nu_i\}_{i\in I}$ be a totally ordered set in $\mathcal{V}$ such that 
%$\mathfrak{v}$ has no maximal element and 
every $f\in \K[x]$ is $\mathfrak{v}$-stable.  Consider the direct system $\{( \mathcal{G}_{\nu_i}, \phi_{ij})\}^{i,j\in I }_{i\leq j}$. Take $\nu = \displaystyle\sup_{i\in I}\nu_i$  as in Proposition~\ref{lemVstable}. Then $\underset{\longrightarrow}{\lim\,}\mathcal{G}_{\nu_i}\cong \mathcal{G}_{\nu}$  
%as abelian groups. Also, $\underset{\longrightarrow}{\lim}\mathcal{G}_Q\cong R_{Q_n}$ 
as commutative rings with unity.

\end{Cor}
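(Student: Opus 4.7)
The proof is essentially a direct application of Theorem~\ref{teoLimVmaiorRstable} combined with Proposition~\ref{lemVstable}. My plan is as follows.

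First, I would invoke Proposition~\ref{lemVstable} to produce the valuation $\nu = \displaystyle\sup_{i\in I}\nu_i \in \mathcal{V}$, which satisfies $\nu_i \leq \nu$ for every $i \in I$. This puts us exactly in the setting of Theorem~\ref{teoLimVmaiorRstable}, so I can conclude that
\[
\underset{\longrightarrow}{\lim\,}\mathcal{G}_{\nu_i} \cong R = \langle \{\inv_\nu(f) \mid f \text{ is } \mathfrak{v}\text{-stable}\}\rangle \subseteq \mathcal{G}_\nu
\]
as commutative rings with unity.

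Next, I would argue that under the hypothesis that every $f \in \K[x]$ is $\mathfrak{v}$-stable, the generating set in the definition of $R$ is simply $\{\inv_\nu(f) \mid f \in \K[x]\}$. By the very definition of $\mathcal{G}_\nu = \bigoplus_{\gamma \in \nu(\K[x])} \mathcal{P}_\gamma / \mathcal{P}_\gamma^+$, every homogeneous element of $\mathcal{G}_\nu$ is of the form $\inv_\nu(f)$ for some $f \in \K[x]$, and every element of $\mathcal{G}_\nu$ is a finite sum of such homogeneous pieces. Therefore the additive subgroup generated by $\{\inv_\nu(f) \mid f \in \K[x]\}$ is all of $\mathcal{G}_\nu$, so $R = \mathcal{G}_\nu$.

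Combining these two observations yields the desired isomorphism $\underset{\longrightarrow}{\lim\,}\mathcal{G}_{\nu_i} \cong \mathcal{G}_\nu$. There is really no substantive obstacle here, since all the work has been done in Theorem~\ref{teoLimVmaiorRstable} and Proposition~\ref{lemVstable}; the only thing to verify is the trivial fact that $\inv_\nu(\K[x])$ additively generates $\mathcal{G}_\nu$, which is immediate from the definition of the graded ring. Note that the remark following Proposition~\ref{lemVstable} guarantees that this corollary also covers the first case (when $\mathfrak{v}$ has a maximal element $\nu_m$), since then every polynomial is trivially $\mathfrak{v}$-stable with $i_f = m$ and $\nu$ coincides with $\nu_m$.
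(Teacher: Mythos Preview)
Your proposal is correct and follows essentially the same approach as the paper: the paper's proof is the single sentence ``It follows from Theorem~\ref{teoLimVmaiorRstable} because $R = \mathcal{G}_{\nu}$,'' and you have simply unpacked that sentence by explaining why the hypothesis forces $R=\mathcal{G}_\nu$.
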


\begin{proof}  It follows from Theorem~\ref{teoLimVmaiorRstable} because $R = \mathcal{G}_{\nu}$.

\end{proof}

\subsection{Third case}\label{ThirdCase}

Now we treat the third case.  Let $\mathfrak{v}= \{\nu_i\}_{i\in I}$ be a totally ordered set in $\mathcal{V}$ such that $\mathfrak{v}$ has no maximum and there is at least one polynomial that is not $\mathfrak{v}$-stable. 
%We assume that $i<j$ implies $\nu_i<\nu_j$. 
Consider the set 
$$C(\mathfrak{v}):=\{ f\in \K[x]\mid f \text{ is } \mathfrak{v}\text{-stable }\}.$$
For every $f\in C(\mathfrak{v})$ we set $\mathfrak{v}(f) = \nu_{i_f}(f)$. Let $Q$ be a monic polynomial of smallest degree $d$ not $\mathfrak{v}$-stable and take $\gamma\in \Gamma_\infty$ such that $\gamma>\nu_i(Q)$ for every $i\in I$. %We have the following result.

\vspace{0.2cm} 
%\pagebreak
Consider the map 
$$\mu(f_0+f_1Q+\ldots+f_rQ^r)=\underset{0\leq j \leq r}{\min}\{ \mathfrak{v}(f_j)+j\gamma  \}, $$
where $f_0+f_1Q+\ldots+f_rQ^r$ is the $Q$-expansion of $f$.

\begin{Prop}\label{lemamuQgammaVal} We have the following.
\begin{enumerate}
\item  We have $\mu\in \mathcal{V}$.
%is a valuation such that $\mu|_\K = \nu_0$.

\item We have $\nu_i<\mu$ for every $i\in I$.

\item We have $\mu = \mu_Q$ and $Q$ is a key polynomial for $\mu$. 
\end{enumerate}

\end{Prop}

\begin{proof}$\,$

\begin{enumerate}
\item By Theorem 2.4 of \cite{novbarnabe}, $\mu$ is valuation. It follows from the definition that $\nu_0=\mu\mid_\K$.

%
%For the first part, see Theorem 2.4 of \cite{novbarnabe}. We easily see that $\mu\in \mathcal{V}$.

\item  Using Proposition 1.21 of \cite{Vaq} and Theorem 5.1 of \cite{MacLane}, one can prove that $ \nu_i(f)\leq \mu(f)$ for every $f\in \K[x]$. Also,  $\nu_i(Q)<\gamma=\mu(Q)$ for every $i\in I$. 
Hence, $\nu_i<\mu$ for every $i\in I$.

\item It follows immediately from the definition of $\mu$ that  $\mu = \mu_Q$. 
We now prove that $Q$ is a key polynomial for $\mu$.  Take $f,g\in \K[x]$ with $\deg(f)<\deg(Q)$ and $\deg(g)<\deg(Q)$ and suppose $fg = lQ+r$ is the $Q$-expansion of $fg$. We will prove that $\mu(fg) = \mu(r)<\mu(lQ)$. 
 Since $\deg(f),\deg(g),\deg(l),\deg(r)<\deg(Q)$, by the minimality of $\deg(Q)$ all $f,g,l$ and $r$ are $\mathfrak{v}$-stable. Since  $\mu(Q)=\gamma>\nu_i(Q)$ for every $i\in I$, we take $k>\max\{i_{f}, i_{g}, i_{l}, i_{r}\}$ and then
 \begin{align*}\mu(lQ)&=\mu(l)+\mu(Q)\\
 &  > \nu_k(l)+ \nu_k(Q)\\
 & \geq \min\{\nu_k(r), \nu_k(fg) \} \\
 & = \min\{\mu(r), \mu(fg) \}.  
 \end{align*}
By Proposition~\ref{propequivPolichave}, $Q$ is a key polynomial for $\mu$. 
% 
% First we prove that $\mu(fg) = \mu(r)$. Suppose, aiming for a contradiction, that $\mu(fg) \neq  \mu(r)$.  If $fg = lQ+r$ is the $Q$-expansion of $fg$, then $\deg(l),\deg(r)<\deg(Q)$.  By the minimality of $\deg(Q)$, all $f,g,l$ and $r$ are $\mathfrak{v}$-stable.  Take $i>\max\{i_{f}, i_{g}, i_{l}, i_{r}\}$. Hence, $\nu_i(fg) = \mu(fg)\neq \mu(r) =\nu_i(r)$. We have
%$$\nu_i(l)+\nu_i(Q) = \nu_i(lQ)=\nu_i(fg-r)=\min\{ \nu_i(fg), \nu_i(r)\}. $$
%However, for $j>i$ we have $\nu_j(l) = \nu_i(l)$,  $\nu_j(fg)= \nu_i(fg)$ and  $\nu_j(r)=\nu_i(r)$. Hence,  $\nu_j(Q) = \min\{ \nu_i(fg), \nu_i(r)\}-\nu_i(l)$ for all $j>i$, contradicting the fact that $Q$ is not $\mathfrak{v}$-stable. Thus, $\mu(fg) = \mu(r)$.  Therefore, 
%$$\mu(lQ)=\mu(l)+\mu(Q) > \nu_i(l)+ \nu_i(Q) \geq \min\{\nu_i(r), \nu_i(fg) \} = \mu(fg)=\mu(r).  $$
%By Proposition~\ref{propequivPolichave}, $Q$ is a key polynomial for $\mu$. 
\end{enumerate}
\end{proof}

Let $R_Q$ be the additive subgroup of $\mathcal{G}_\mu$ generated by the set $\{ \inv_\mu(f)\mid f\in \K[x]_d\}$. Since $Q$ is a key polynomial for $\mu$, Proposition~\ref{propequivPolichaveRq} guarantees that $R_Q$ is a subring of $\mathcal{G}_\mu$. 

\vspace{0.2cm}

\begin{Cor}\label{teoLimitSeqNotStable} Let $\mathfrak{v}= \{\nu_i\}_{i\in I}$ be a totally ordered set in $\mathcal{V}$ such that $\mathfrak{v}$ has no maximum and there is at least one polynomial that is not $\mathfrak{v}$-stable. 
%We assume that $i<j$ implies $\nu_i<\nu_j$. 
Let $Q$ be a monic polynomial of smallest degree $d$ that is not $\mathfrak{v}$-stable and take $\gamma\in \Gamma_\infty$ such that $\gamma>\nu_i(Q)$ for every $i\in I$. Take $\mu$ as in Proposition~\ref{lemamuQgammaVal} and $R_Q$  as in the above paragraph.   Consider the direct system $\{( \mathcal{G}_{\nu_i}, \phi_{ij})\}^{i,j\in I }_{i\leq j}$. Then $\underset{\longrightarrow}{\lim\,} \mathcal{G}_{\nu_i}\cong R_{Q}$  
as commutative rings with unity.
% Also, $\underset{\longrightarrow}{\lim}\mathcal{G}_Q\cong R_{Q_n}$ 
%as commutative rings with unity.

\end{Cor}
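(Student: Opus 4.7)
The plan is to apply Theorem~\ref{teoLimVmaiorRstable} with $\nu=\mu$. By Proposition~\ref{lemamuQgammaVal}(2) we have $\nu_i\leq\mu$ for every $i\in I$, so the hypothesis is satisfied and the theorem yields
$$\underset{\longrightarrow}{\lim\,}\mathcal{G}_{\nu_i}\cong R,\qquad R=\langle\{\inv_\mu(f)\mid f\text{ is $\mathfrak{v}$-stable}\}\rangle\subseteq\mathcal{G}_\mu.$$
The problem therefore reduces to proving the identification $R=R_Q$.

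The inclusion $R_Q\subseteq R$ is immediate: any $f\in\K[x]_d$ satisfies $\deg(f)<d$, so by the minimality of $d$ it is $\mathfrak{v}$-stable, whence $\inv_\mu(f)\in R$.

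The reverse inclusion is the substantive step. The key claim is that whenever $f$ is $\mathfrak{v}$-stable with $Q$-expansion $f=f_0+f_1Q+\cdots+f_rQ^r$, one has $\mu(f)=\mu(f_0)$. Once this is established, $\mu(f-f_0)>\mu(f)$, so Lemma~\ref{lemPropinv}(3) gives $\inv_\mu(f)=\inv_\mu(f_0)\in R_Q$, and $R\subseteq R_Q$ follows.

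To prove the claim, first note that by Proposition~\ref{lemamuQgammaVal}(3) we have $\mu=\mu_Q$, and every $f_j$ has $\deg(f_j)<d$ and is therefore $\mathfrak{v}$-stable; combined with Lemma~\ref{lemMenorIgualMu}(2), this gives $\mu(f)=\min_j\{\mathfrak{v}(f_j)+j\gamma\}\leq\mu(f_0)$. Supposing for contradiction that $\mu(f)<\mu(f_0)$, Lemma~\ref{lemMenorIgualMu}(2) applied to $f$ and to $f_0$ yields $\nu_i(f)=\mu(f)<\mu(f_0)=\nu_i(f_0)$ for all sufficiently large $i$, so $\nu_i(f-f_0)=\nu_i(f)=\mu(f)$. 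Writing $f-f_0=Qg$ with $g=f_1+\cdots+f_rQ^{r-1}$ then gives $\nu_i(g)=\mu(f)-\nu_i(Q)$ for all large $i$; since $Q$ is not $\mathfrak{v}$-stable, the dichotomy of Corollary 2.3 of \cite{novbarnabe} forces $\{\nu_i(Q)\}$ to be strictly increasing, hence $\{\nu_i(g)\}$ to be strictly decreasing past some index---contradicting the monotonicity $\nu_i(g)\leq\nu_j(g)$ for $i\leq j$. The main obstacle is producing this last contradiction cleanly; everything else is bookkeeping on top of Theorem~\ref{teoLimVmaiorRstable} and the previously established lemmas.
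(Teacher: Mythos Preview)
Your proposal is correct and follows essentially the same route as the paper: apply Theorem~\ref{teoLimVmaiorRstable} with $\nu=\mu$, then identify $R$ with $R_Q$ by showing $\inv_\mu(f)=\inv_\mu(f_0)$ for $\mathfrak{v}$-stable $f$, where $f_0$ is the remainder of $f$ upon division by $Q$. The only cosmetic difference is in the last step: the paper first shows directly that $gQ$ is not $\mathfrak{v}$-stable (from $\nu_i(Q)<\mu(Q)$) and then chains $\mu(gQ)>\nu_j(gQ)>\nu_i(gQ)\geq\min\{\mu(f),\mu(f_0)\}$, whereas you argue by contradiction that $\mu(f)<\mu(f_0)$ forces $\nu_i(g)$ to be eventually strictly decreasing---both arguments exploit exactly the same tension between the strict growth of $\nu_i(Q)$ and the stability of $f$ and $f_0$.
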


\begin{proof} By  Theorem~\ref{teoLimVmaiorRstable} and Lemma~\ref{lemR}, we have $\underset{\longrightarrow}{\lim\,}\mathcal{G}_{\nu_i}\cong R = R_Q$.
\end{proof}

\begin{Cor}\label{corLimY}We have 
$$\mathcal{G}_\mu \cong (\underset{\longrightarrow}{\lim\,}\mathcal{G}_{\nu_i})[Y],  $$
where $Y$ is an indeterminate over $\underset{\longrightarrow}{\lim\,}\mathcal{G}_{\nu_i}$.

\end{Cor}

\begin{proof}
By Proposition~\ref{lemAlgGraduadaAnelPoliy}, $\mathcal{G}_\mu = R_Q[y_Q]$, with $y_Q =\inv_\mu(Q)$. Then, we extend the isomorphism  $\underset{\longrightarrow}{\lim\,}\mathcal{G}_{\nu_i}\cong R_Q$ to an isomorphism  $ ( \underset{\longrightarrow}{\lim\,}\mathcal{G}_{\nu_i})[Y] \cong \mathcal{G}_\mu  $ by sending $Y$ to $y_Q$.

\end{proof}

\section{Limit key polynomials and direct limits}\label{LKPLimit}

%
%
%\section{Limit key polynomials}

Let $\nu$ be a valuation on $\K[x]$ extending $\nu_0$. 
%Given  a key polynomial $Q$ for $\nu$, we define
%$$\alpha(Q):=\min\{ \deg(f)\mid \nu_Q(f)<\nu(f)  \} $$
%\noindent and
%$$\Psi(Q):=\{ f\in\K[x]\mid f \text{ is monic}, \nu_Q(f)<\nu(f) \text{ and } \deg(f)=\alpha(Q) \}. $$
%
%\vspace{0.3cm} 	
%
%If $\nu_Q=\nu$, then we define $\alpha(Q)=\infty$ and hence $\Psi(Q)=\emptyset$.
%
%
%\begin{Def}\label{defLKP1}
%A monic polynomial $Q$ is called \textbf{LKP1} 
%%(or a \textbf{limit key polynomial of Spivakovsky-Novacoski})
% if there exists a key polynomial $Q_{-}$ satisfying
%\begin{itemize}
%
%\item $(K1)$: we have $\alpha(Q_{-})=\deg(Q_{-})$;
%
%\item $(K2)$: the set $\{ \nu(Q')\mid Q'\in \Psi(Q_{-}) \}$ does not have a maximal element;
%
%\item $(K3)$: we have $\nu_{Q'}(Q)<\nu(Q)$ for every $Q'\in \Psi(Q_{-})$;
%
%\item $(K4)$: the polynomial  $Q$ has least degree among polynomials satisfying $(K3)$.
%
%\end{itemize}
%\end{Def}
Take $n\in \N$ and let $\Psi_n$ denote the set of all key polynomials for $\nu$ with degree $n$. Assume that $\Psi_n\neq \emptyset$ and that  $\{ \nu(Q)\mid Q\in \Psi_n \}$ does not have a maximum. 
%$\Psi_n$ does not have an element $Q$ with maximum $\epsilon(Q)$  (that is, if $Q\in \Psi_n$, then there exists $Q'\in \Psi_n$ such that  $\epsilon(Q)< \epsilon(Q')$). 
Consider the set 
$$\mathcal{K}_n:=\{ f\in \K[x]\mid  \nu_Q(f)<\nu(f) \text{ for all } Q\in \Psi_n \}.$$

%\begin{Def}\label{defLKP2}
%	 A monic polynomial $Q_n\in K[x]$ is called \textbf{LKP2}  for $\Psi_n$
%%	 (or \textbf{limit key polynomial of Novacoski-Moraes-Barnabé} 
%%	 for $\Psi_n$)
%	  if $Q_n\in \mathcal{K}_n$ and $Q_n$ has the least degree among polynomials in $\mathcal{K}_n$. 
%\end{Def}

\begin{Lema} Take  $Q_n\in \mathcal{K}_n$ a monic of polynomial of least degree. Then $Q_n$ is a key polynomial for $\nu$. 

\end{Lema}

\begin{proof}Consider $f_1,f_2\in \K[x]$ with $\deg(f_1),\deg(f_2)<\deg(Q_n)$ and suppose $f_1f_2 = f_3Q_n+f_4$ is the $Q_n$ expansion of $f_1f_2$. Since each $f_l$, $1\leq l\leq 4$, has degree smaller than $\deg(Q_n)$, it follows by the minimality of $\deg(Q_n)$ that $f_l\not\in  \mathcal{K}_n$. Hence, for each $l$, $1\leq l\leq 4$, there exists $Q_l\in \Psi_n$ such that $\nu_{Q_l}(f_l)=\nu(f_l)$. Since $\{ \nu(Q)\mid Q\in \Psi_n \}$ does not have a maximum, there exists $Q\in \Psi_n$ such that $\nu(Q_l)<\nu(Q)$ for all $l$, $1\leq l\leq 4$. By Lemma~\ref{prop3itensQQlinhaPoliChaves} (iii) and Lemma~\ref{lempolichaveEpsilonmenor} (iii), it follows that $\nu_Q(f_l) = \nu(f_l)$ for every $l$, $1\leq l\leq 4$. Thus,
\begin{align*}\nu(f_3Q_n) &= \nu(f_3)+\nu(Q_n)\\
& >\nu_Q(f_3)+\nu_Q(Q_n)\\
&=\nu_Q(f_1f_2-f_4)\\
&\geq \min\{\nu_Q(f_1f_2),\nu_Q(f_4) \}\\
&=  \min\{\nu(f_1f_2),\nu(f_4) \}.
\end{align*}

Therefore, $\nu(f_1f_2)=\nu(f_4)<\nu(f_3Q_n)$ and then $Q_n$ is a key polynomial for $\nu$ by Proposition~\ref{propequivPolichave}. 

\end{proof}

\begin{Def}\label{defLKP2}
	 A monic polynomial $Q_n\in \K[x]$ is called a \textbf{limit key polynomial}  for $\Psi_n$
%	 (or \textbf{limit key polynomial of Novacoski-Moraes-Barnabé} 
%	 for $\Psi_n$)
	  if $Q_n\in \mathcal{K}_n$ and $Q_n$ has the least degree among polynomials in $\mathcal{K}_n$. 
\end{Def}

\vspace{0.2cm}

Consider the following relation on $\Psi_n$:
\begin{equation}\label{QmenorQlinha}
Q\preceq Q' \Leftrightarrow \nu_Q\leq \nu_{Q'} \text{ and } Q\prec Q' \Leftrightarrow \nu_Q<\nu_{Q'}.
\end{equation}
	
	We note that if we take $Q,Q'\in \Psi_n$, then either $Q\preceq Q'$ or $Q'\preceq Q$. Indeed, considering $\delta(Q), \delta(Q')\in \Gamma_\Q$, since they belong to a totally ordered group, we have $\delta(Q)\leq \delta(Q')$ or $\delta(Q')\leq  \delta(Q)$. By Lemma~\ref{lempolichaveEpsilonmenor} (2), for every $f\in \K[x]$ we have $\nu_{Q}(f)\leq \nu_{Q'}(f)$ or $\nu_{Q'}(f)\leq \nu_{Q}(f)$, that is, $Q\preceq Q'$ or $Q'\preceq Q$.
	Therefore, with this pre-order
	%$(\Psi_n, \preceq)$ is a preordered set and
	 $(\Psi_n, \preceq)$ is a directed set. It follows also that $\mathfrak{v} = \{\nu_Q  \}_{Q\in \Psi_n}$ is a totally ordered set.

\begin{Cor}
	Consider the family of graded rings $\{  \mathcal{G}_Q\}_{Q\in \Psi_n}$ and, for $Q \preceq Q'$, let $\phi_{QQ'}$ be the map
	\begin{align*}
	\phi_{QQ'}: \hspace{0.5cm} \mathcal G_Q\hspace{0.3cm}   &\longrightarrow \mathcal G_{Q'}\\
	\inv_Q(f) &\longmapsto  \begin{cases}
		\inv_{Q'}(f)& \mbox{ if }\nu_Q(f)=\nu_{Q'}(f)\\ 
		0&\mbox{ if }\nu_Q(f)<\nu_{Q'}(f),
	\end{cases} 
	\end{align*}
extended naturally to arbitrary (that is, not necessarily homogeneous) elements of $\mathcal G_Q$. Then  $\{( \mathcal{G}_Q, \phi_{QQ'})\}^{Q,Q'\in \Psi_n}_{Q\preceq Q'}$ is a direct system over $\Psi_n$.

\end{Cor}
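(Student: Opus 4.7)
The approach is to reduce this corollary directly to Lemma~\ref{lemDirectSystemValTotalOrder}. Essentially, the only thing to do is to interpret $(\Psi_n, \preceq)$ as an index set of a totally ordered family of valuations in $\mathcal{V}$, and observe that the maps $\phi_{QQ'}$ defined here are literally the maps defined in that lemma, specialized to the index set $\Psi_n$.

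First I would verify that $\mathfrak{v}:=\{\nu_Q\mid Q\in\Psi_n\}$ is a totally ordered subset of $\mathcal{V}$. Each $\nu_Q$ belongs to $\mathcal{V}$ since the truncation $\nu_Q$ agrees with $\nu$ on constants, hence extends $\nu_0$. For totality, I would invoke exactly the argument given in the paragraph immediately preceding the corollary: for any $Q,Q'\in\Psi_n$, the elements $\delta(Q),\delta(Q')\in\Gamma_\Q$ lie in a totally ordered group, so one is $\leq$ the other, and Lemma~\ref{lempolichaveEpsilonmenor}(2) then yields $\nu_Q\leq\nu_{Q'}$ or $\nu_{Q'}\leq\nu_Q$ pointwise on $\K[x]$. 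Thus the preorder $\preceq$ on $\Psi_n$ (which is the pullback along $Q\mapsto\nu_Q$ of the order on $\mathcal{V}$) is total.

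Next, I would observe that once the index set is identified as a totally ordered subset of $\mathcal{V}$, the maps $\phi_{QQ'}$ in the statement of the corollary are exactly the maps $\phi_{ij}$ of Lemma~\ref{lemDirectSystemValTotalOrder} with $i=Q$, $j=Q'$. Therefore the two direct system axioms to check, namely $\phi_{QQ}=\mathrm{id}_{\mathcal{G}_Q}$ and $\phi_{Q'Q''}\circ\phi_{QQ'}=\phi_{QQ''}$ whenever $Q\preceq Q'\preceq Q''$, are established verbatim by the proof of Lemma~\ref{lemDirectSystemValTotalOrder}: one does a case analysis on whether $\nu_Q(f)=\nu_{Q'}(f)=\nu_{Q''}(f)$ or a strict inequality occurs somewhere in the chain.

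The only point of caution, and thus the closest thing to an obstacle, is that the map $Q\mapsto\nu_Q$ from $\Psi_n$ to $\mathfrak{v}$ need not be injective: distinct key polynomials of the same degree can produce the same truncation (by Lemma~\ref{lempolichaveEpsilonmenor}(2), this occurs precisely when $\delta(Q)=\delta(Q')$), so $\preceq$ is only a total preorder and not necessarily a partial order. However, this causes no trouble because the proof of Lemma~\ref{lemDirectSystemValTotalOrder} never uses antisymmetry — when $\nu_Q=\nu_{Q'}$ the map $\phi_{QQ'}$ is simply the identity, and both compatibility conditions remain trivially valid. With this mild adjustment in mind, the corollary follows directly from Lemma~\ref{lemDirectSystemValTotalOrder}.
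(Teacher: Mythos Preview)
Your proposal is correct and follows essentially the same approach as the paper: the paper's proof is the single line ``This follows from Lemma~\ref{lemDirectSystemValTotalOrder} because $\{\nu_Q\}_{Q\in \Psi_n}$ is a totally ordered set,'' and your argument simply unpacks this, together with the paragraph preceding the corollary establishing totality. Your additional remark about $\preceq$ being only a preorder is a reasonable clarification but not something the paper addresses explicitly.
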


\begin{proof} This follows from Lemma~\ref{lemDirectSystemValTotalOrder} because $\{  \nu_Q\}_{Q\in \Psi_n}$ is a totally ordered set. 
%
%The verification is similar to what was done in Lemma~\ref{lemDirectSystemValTotalOrder}.
%
%We need to check  two properties. 
%\begin{itemize}
%\item By definition, $\phi_{QQ}(\inv_Q(f)) = \inv_Q(f)$ for all $f\in \K[x]$, hence $\phi_{QQ}$ is the identity on $\mathcal{G}_Q$.
%
% \item Take $Q\preceq Q' \preceq Q''$. By the definition of the preorder, we have \linebreak $\nu_Q(f)\leq \nu_{Q'}(f)\leq \nu_{Q''}(f)$  for  all $f\in \K[x]$.
% If $\nu_Q(f)<\nu_{Q'}(f)\leq \nu_{Q''}(f)$, then 
% \begin{align*}
% (\phi_{Q'Q''}\circ \phi_{QQ'}) (\inv_Q(f))&= \phi_{Q'Q''}(\phi_{QQ'}(\inv_Q(f)))\\
% &= \phi_{Q'Q''}(0)\\
% &=0\\
% &=\phi_{QQ''}(\inv_Q(f)).
% \end{align*}
%
% If $\nu_Q(f)=\nu_{Q'}(f)< \nu_{Q''}(f)$, then 
%  \begin{align*}
% (\phi_{Q'Q''}\circ \phi_{QQ'}) (\inv_Q(f))&= \phi_{Q'Q''}(\phi_{QQ'}(\inv_Q(f)))\\
% &=  \phi_{Q'Q''}(\inv_{Q'}(f))\\
% &=0\\
% &= \phi_{QQ''}(\inv_Q(f)).
% \end{align*}
%
%If $\nu_Q(f)=\nu_{Q'}(f)= \nu_{Q''}(f)$, then
%  \begin{align*}
% (\phi_{Q'Q''}\circ \phi_{QQ'}) (\inv_Q(f))&= \phi_{Q'Q''}(\phi_{QQ'}(\inv_Q(f)))\\
% &=  \phi_{Q'Q''}(\inv_{Q'}(f))\\
% &=\inv_{Q''}(f)\\
% &= \phi_{QQ''}(\inv_Q(f)).
% \end{align*} 
%\end{itemize}
%
%Therefore, $\phi_{QQ''}=\phi_{Q'Q''}\circ \phi_{QQ'}$ for all $Q\preceq Q''\preceq Q''$.  Then  $\{( \mathcal{G}_Q, \phi_{QQ'})\}^{Q,Q'\in \Psi_n}_{Q\preceq Q'}$ is a direct system over $\Psi_n$.

\end{proof}

%mostrar que $Q\leq Q'$ definido por $\mu_Q(f)\leq \mu_{Q'}(f)$ é preorder (reflexiva e transitiva). 

%motrar que $\Psi_n$ é directed set (com a hipotese de não ter maximo com relação a ordem do epsilon, usar o corolario 3.10 do on maclane vaquie ... do josnei)

%
%mostrar que $\{ \phi_{Q_iQ_j}\}_{Q_i,Q_j\in \Psi_n, Q_i\leq Q_j}$  é uma familia direta de morfismos para $\{ \mathcal{G}_{Q}\}_{Q\in \Psi_n}$ 

%no fim, talvez não precise da equivalencia
%\begin{Lema}relação de equivalencia $\sim_\epsilon$ em $\Psi_n$ epsilon igual. Colocar a ordem pontual $\preceq_\epsilon$ como no Lemma 4.1 e ver que é total em $\Psi_n/\sim_\epsilon $. Chamar, por enquanto, $[Q]=[Q]_{\epsilon(Q)}$ a classe de $Q$.
%
%
%
%\end{Lema}
%
%
%\begin{Cor}  $\{( \mathcal{G}_Q, \phi_{QQ'})\}^{[Q],[Q']\in \Psi_n/\sim_\epsilon}_{[Q]\preceq_\epsilon [Q']}$ is a direct system over $\Psi_n$. 
%
%\end{Cor}
%
%
%\begin{Obs} para simplificar a notação, consideramos $\Psi_n$ totalmente ordenado pela  ordem $\preceq$. 
%
%
%\end{Obs}

In the next lemma we gather some properties of limit key polynomials for $\Psi_n$. These properties will allow us to prove that  $\mathfrak{v} = \{\nu_Q  \}_{Q\in \Psi_n}$
% is a totally ordered subset of $\mathcal{V}$ with 
has no maximum.

\vspace{0.2cm} 

\begin{Lema}\label{lemPropQn}Let $Q_n$ be a limit key polynomial for $\Psi_n$. %(according to Definition~\ref{defLKP}).

\begin{enumerate}
\item We have $\delta(Q)<\delta(Q_n)$ for every $Q\in \Psi_n$. Hence, $\deg(Q_n)\geq n$. %Thus, $Q<Q_n$ for every $Q\in \Psi_n$.

\vspace{0.1cm} 

\item For every $Q\in \Psi_n$, we have $\nu_Q(f)\leq \nu_{Q_n}(f)$ for all $f\in \K[x]$ and \linebreak $\nu_Q(Q_n)<\nu_{Q_n}(Q_n)=\nu(Q_n)$. Also,  $\nu_{Q}(Q_n)<\nu_{Q'}(Q_n)$ for every $Q\prec Q'$ in $\Psi_n$.

\item If $\deg(f)< \deg(Q_n)$, then there exists $\bar{Q}\in \Psi_n$ (depending on $f$) such that  $\nu_{\bar{Q}}(f)=\nu_{Q_n}(f)=\nu(f)$. Moreover, $\nu_{Q}(f)=\nu_{\bar{Q}}(f)$ for every $Q\in \Psi_n$ with $\bar{Q} \preceq Q$. 

%
%\item  If $\deg(f)< \deg(Q_n)$, then $\nu_{Q_f}(f)= \nu_{Q}(f)$ for every $Q_i,Q_j\in \Psi_n$.

\end{enumerate}

\end{Lema}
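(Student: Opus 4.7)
The plan for the three items is to translate the definitions of $\Psi_n$, $\mathcal{K}_n$, and $Q_n$ into the language of $\delta$ and truncations, and then apply the structural lemmas from Section~\ref{Preli}. For (1), I would first rule out $\deg(Q_n) < n$: for any $Q \in \Psi_n$, if $\deg(Q_n) < \deg(Q) = n$, the $Q$-expansion of $Q_n$ is $Q_n$ itself, so $\nu_Q(Q_n) = \nu(Q_n)$, contradicting $Q_n \in \mathcal{K}_n$. Once $\deg(Q_n) \geq n$ is established, the case $\deg(Q_n) > n$ is handled directly by Lemma~\ref{prop3itensQQlinhaPoliChaves}(1), while in the case $\deg(Q_n) = n$ the desired inequality follows from item~(3) of the same lemma applied to $Q$ and $Q_n$, using $\nu_Q(Q_n) < \nu(Q_n)$. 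Either way we get $\delta(Q) < \delta(Q_n)$.

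For (2), I would feed $\delta(Q) < \delta(Q_n)$ into Lemma~\ref{lempolichaveEpsilonmenor}(2) to obtain $\nu_Q(f) \leq \nu_{Q_n}(f)$ for every $f \in \K[x]$. The identity $\nu_{Q_n}(Q_n) = \nu(Q_n)$ is immediate from the trivial $Q_n$-expansion of $Q_n$, and $\nu_Q(Q_n) < \nu(Q_n)$ is just the membership $Q_n \in \mathcal{K}_n$. For the strict chain $\nu_{Q_i}(Q_n) < \nu_{Q_j}(Q_n)$ whenever $Q_i \prec Q_j$, I would first upgrade $\nu_{Q_i} < \nu_{Q_j}$ to $\delta(Q_i) < \delta(Q_j)$: if instead $\delta(Q_j) \leq \delta(Q_i)$ then Lemma~\ref{lempolichaveEpsilonmenor}(2) (applied with the roles reversed) would yield $\nu_{Q_j} \leq \nu_{Q_i}$, a contradiction. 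Then Lemma~\ref{lempolichaveEpsilonmenor}(4) with $f = Q_n$, combined with $\nu_{Q_j}(Q_n) < \nu(Q_n)$ from $Q_n \in \mathcal{K}_n$, gives the required strict inequality.

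For (3), the minimality of $\deg(Q_n)$ in $\mathcal{K}_n$ places any $f$ with $\deg(f) < \deg(Q_n)$ outside $\mathcal{K}_n$, so by the very definition of $\mathcal{K}_n$ there exists $Q_f \in \Psi_n$ with $\nu_{Q_f}(f) = \nu(f)$. The sandwich $\nu_{Q_f}(f) \leq \nu_{Q_n}(f) \leq \nu(f) = \nu_{Q_f}(f)$, produced by part~(2) and the general bound that a truncation of $\nu$ is $\leq \nu$, forces all three quantities to coincide. The \emph{moreover} clause is the same sandwich with $Q$ replacing $Q_n$: for $Q_f \preceq Q$ we have $\nu_{Q_f}(f) \leq \nu_Q(f) \leq \nu(f) = \nu_{Q_f}(f)$. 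I expect no genuine obstacle, since the statement packages together consequences of the lemmas already recalled; the one step that needs a moment of thought is the promotion of the abstract order $Q_i \prec Q_j$ to the inequality $\delta(Q_i) < \delta(Q_j)$ in (2), which is the hinge that lets us invoke Lemma~\ref{lempolichaveEpsilonmenor}(4).
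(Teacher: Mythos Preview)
Your proof is correct and follows essentially the same approach as the paper's: items~(2) and~(3) match the paper's arguments almost verbatim. The only cosmetic difference is in~(1), where you first establish $\deg(Q_n)\geq n$ directly from the $Q$-expansion and then split on degree to obtain $\delta(Q)<\delta(Q_n)$, whereas the paper argues the $\delta$-inequality first by contradiction (supposing $\delta(Q_n)\leq\delta(Q)$ and applying Lemma~\ref{lempolichaveEpsilonmenor}(2) to get $\nu(Q_n)=\nu_{Q_n}(Q_n)\leq\nu_Q(Q_n)$) and then deduces the degree bound from Lemma~\ref{prop3itensQQlinhaPoliChaves}(1).
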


%\pagebreak

\begin{proof}
\noindent \begin{enumerate}
\item Suppose $\delta(Q_n)\leq \delta(Q)$. Hence, by Lemma~\ref{lempolichaveEpsilonmenor} (2), $\nu_{Q_n}(f)\leq \nu_Q(f)$ for every $f\in \K[x]$. In particular, $\nu(Q_n)=\nu_{Q_n}(Q_n)\leq \nu_Q(Q_n)$. However, this contradicts $Q_n\in \mathcal{K}_n$. Therefore, $\delta(Q)<\delta(Q_n)$ for every $Q\in \Psi_n$. Now suppose $\deg(Q_n)<n=\deg(Q)$. By Lemma~\ref{prop3itensQQlinhaPoliChaves} (1), we would have $\delta(Q_n)<\delta(Q)$, a contradiction. Thus, $\deg(Q_n)\geq n$.

\vspace{0.1cm} 

\item By Lemma~\ref{lempolichaveEpsilonmenor}, Lemma~\ref{prop3itensQQlinhaPoliChaves} and the preceding item, we have \linebreak $\nu_Q(f)\leq \nu_{Q_n}(f)$ for all $f\in \K[x]$ and $\nu_Q(Q_n)<\nu_{Q_n}(Q_n)=\nu(Q_n)$ for every $Q\in \Psi_n$. 
Also, take $Q\prec Q'$, that is, there exists $g\in \K[x]$ such that $\nu_{Q}(g)<\nu_{Q'}(g)$. Thus, by Lemma~\ref{lempolichaveEpsilonmenor} (2), we must have $\delta(Q)<\delta(Q')$.  Since  $\nu_{Q'}(Q_n)<\nu(Q_n)$, we see by Lemma~\ref{lempolichaveEpsilonmenor} (4) that $\nu_{Q}(Q_n)<\nu_{Q'}(Q_n)$.  

\vspace{0.2cm} 

\item Take $f\in \K[x]$ such that $\deg(f)<\deg(Q_n)$. By the preceding item, \linebreak $\nu_Q(f)\leq \nu_{Q_n}(f)=\nu(f)$ for every $Q\in \Psi_n$.  Since $Q_n$ has the minimal degree among polynomials in $\mathcal{K}_n$, there exists $\bar{Q}\in \Psi_n$ such that $\nu_{\bar{Q}}(f)= \nu_{Q_n}(f)=\nu(f)$. Now, if $Q\in \Psi_n$ is such that $\bar{Q}\preceq Q$, then 
$$\nu_{Q_n}(f)\geq \nu_Q(f)\geq\nu_{\bar{Q}}(f) = \nu_{Q_n}(f).  $$
Hence, $\nu_Q(f)=\nu_{\bar{Q}}(f)$. 

\end{enumerate}

\end{proof}

\begin{Cor}\label{corQnNotvStable} 
%Let $\nu$ be a valuation on $\K[x]$ extending $\nu_0$. Let $Q_n$ be a limit key polynomial for $\Psi_n$. 
The family $\mathfrak{v} = \{\nu_Q  \}_{Q\in \Psi_n}$ is a totally ordered subset of $\mathcal{V}$ with no maximum. Also, $Q_n$ is a polynomial of least degree that is not $\mathfrak{v}$-stable. If we take $\gamma=\nu(Q_n)$, then $\nu_{Q_n}(f)=\underset{0\leq j \leq r}{\min}\{ \mathfrak{v}(f_j)+j\gamma  \}$, where $f_0, \ldots, f_r$ are the coefficients of the $Q_n$-expansion of $f$.
\end{Cor}

\begin{proof} 
%We see that $\mathfrak{v} = \{\nu_Q  \}_{Q\in \Psi_n}$ is a totally ordered set since, 
%%%by Remark~\ref{obsPreorderTotal}, 
%given $Q_i,Q_j\in \Psi_n$ we must have $Q_i \preceq Q_j$ or $Q_j \preceq Q_i$ and then $\nu_{Q_i}\leq \nu_{Q_j}$ or $\nu_{Q_j}\leq \nu_{Q_i}$. 

\vspace{0.3cm}

We already saw that $\mathfrak{v} = \{\nu_Q  \}_{Q\in \Psi_n}$ is a totally ordered set. Suppose, aiming for a contradiction, that $\mathfrak{v} $ has maximum $\nu_{Q''}$. Then $\nu_{Q}(f)\leq \nu_{Q''}(f)$ for all $Q\in \Psi_n$ and $f\in \K[x]$. Since $\{ \nu(Q)\mid Q\in \Psi_n \}$  does not have a  maximum, there exists $Q'\in \Psi_n$ such that $\delta(Q'')<\delta(Q')$ (Lemma~\ref{prop3itensQQlinhaPoliChaves}). By Lemma~\ref{lempolichaveEpsilonmenor} (2), $\nu_{Q''}(f)\leq \nu_{Q'}(f)$, which implies $\nu_{Q''}(f)=\nu_{Q'}(f)$ for all $f\in \K[x]$. However, by Lemma~\ref{lemPropQn} (2), we have $\nu_{Q'}(Q_n)<\nu(Q_n)$ and this, together with Lemma~\ref{lempolichaveEpsilonmenor} (4), implies $\nu_{Q''}(Q_n)<\nu_{Q'}(Q_n)$, a contradiction to the maximality of $\nu_{Q''}$. Therefore, $\mathfrak{v}$ has no maximum.

\vspace{0.2cm} 	

By Lemma~\ref{lemPropQn} (2), we have $\nu_{Q}(Q_n)<\nu_{Q'}(Q_n)$ for $Q\prec Q'$, that is, $Q_n$ is not $\mathfrak{v}$-stable. Also,  we have $\nu_{Q}(Q_n)<\nu(Q_n)=\gamma$ for all $Q\in \Psi_n$.
Moreover, by Lemma~\ref{lemPropQn} (3), if $\deg(g)<\deg(Q_n)$, then there exists $\bar{Q}\in \Psi_n$ such that $\nu_{\bar{Q}}(g)= \nu_{Q}(g) $ for every $Q\in \Psi_n$ with $\bar{Q}\preceq Q$. That is, $g$ is $\mathfrak{v}$-stable and  $\nu(g) = \nu_{\bar{Q}}(g)=\mathfrak{v}(g)$. 
%\vspace{0.3cm} 	
%
%By Lemma~\ref{lemPropQn} (2), we have $\nu_{Q}(Q_n)<\nu(Q_n)=\gamma$ for all $Q\in \Psi_n$. Also, if $\deg(g)<\deg(Q_n)$, then $\nu(g) = \nu_{Q_g}(g)=\mathfrak{v}(g)$. 
%Indeed, if there exists \linebreak $Q_i\in \Psi_n$ such that $\nu_{Q_i}(f)<\nu(f)$, then by the assumption over $\Psi_n$ we can take $Q_j\in \Psi_n$ such that $\delta(Q_i)<\delta(Q_j)$. However, $\nu_{Q_i}(f)=\nu_{Q_j}(f)$. Hence, \linebreak $\delta(Q_i)<\delta(Q_j)$ and $\nu_{Q_j}(f)<\nu(f)$. By Lemma~\ref{lempolichaveEpsilonmenor} (4), we must have \linebreak $\nu_{Q_i}(f)<\nu_{Q_j}(f)$, a contradiction. 
Thus, for every $f\in \K[x]$, we can write $f = f_0+f_1Q_n+\ldots+f_rQ_{n}^r$ and conclude that
$$\nu_{Q_n}(f) = \underset{0\leq j \leq r}{\min}\{ \nu(f_j)+j\nu(Q_n)  \} = \underset{0\leq j \leq r}{\min}\{ \mathfrak{v}(f_j)+j\gamma  \}.   $$
\end{proof}

\begin{Cor}\label{corLimQn} We have $\underset{\longrightarrow}{\lim\,}\mathcal{G}_Q\cong R_{Q_n}$ as commutative rings with unity and $\mathcal{G}_{Q_n}\cong (\underset{\longrightarrow}{\lim\,}\mathcal{G}_Q)[Y]$.
% Consider $R_{Q_n}$ as an abelian group. Then, $R_{Q_n}$ satisfies Conditions $(1)$ and $(2)$ of Theorem~\ref{teoLimiteCondi}. Therefore, $\underset{\longrightarrow}{\lim}\mathcal{G}_Q\cong R_{Q_n}$  as abelian groups. Also, $\underset{\longrightarrow}{\lim}\mathcal{G}_Q\cong R_{Q_n}$ as commutative rings with unity.

\end{Cor}

\begin{proof} 
It follows from Corollary~\ref{teoLimitSeqNotStable} and Corollary~\ref{corLimY}. 

\end{proof}

\section{Valuation-algebraic valuations and direct limits}\label{ValAlgLimit}

In this last section, we give an application for Corollary~\ref{teoLimitSeqStable}. We start defining the concepts of valuation-transcendental and valuation-algebraic valuations.

\vspace{0.2cm} 

\begin{Def} A valuation $\nu$ on $\K[x]$  extending $\nu_0$ is called \linebreak \textbf{value-transcendental} if either it is not Krull or the quotient group $\nu(\K[x])/\nu_0\K$ is not a torsion group. We say that $\nu$ is \textbf{residue-transcendental} if it is Krull and the field extension $\K(x)\nu\mid \K\nu_0$ is transcendental, where we denote also by $\nu$ the unique extension of $\nu$ from $\K[x]$ to $\K(x)$.
\end{Def}

\vspace{0.1cm} 

\begin{Def}\label{defValAlg} A valuation $\nu$ on $\K[x]$  extending $\nu_0$  is called \textbf{valuation-transcen-\\dental} if it is value-transcendental or residue-transcendental. We say that $\nu$ is \textbf{valuation-algebraic} if it is not valuation-transcendental.
 
\end{Def}

\begin{Obs}\label{obsAbhyankar} By Abhyankar's inequality (see \cite{zar}, p.330), we see that a valuation cannot be  value-transcendental and residue-transcendental at the same time.

\end{Obs}

\vspace{0.1cm} 

\begin{Obs}Explicitly, a valuation $\nu$ on $\K[x]$  extending $\nu_0$  is valuation-algebraic if it is a Krull valuation, $\nu(\K[x])/\nu_0\K$ is a torsion group and $\K(x)\nu\mid \K\nu_0$ is an algebraic field extension.

\end{Obs}

\vspace{0.1cm}

\begin{Lema}\label{lemValuationAlgTrunc} Let $\nu$ be a valuation-algebraic valuation on $\K[x]$  extending $\nu_0$ on $\K$. Suppose that $q$ is a polynomial such that $\nu_q$ is a valuation. Then $\nu_q$ is residue-transcendental.
\end{Lema}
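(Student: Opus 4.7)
My plan is to first check that $\nu_q$ is Krull, so the residue field $\K(x)\nu_q$ is well defined, and then to exhibit an element of $\VR_{\nu_q}$ whose image in $\K(x)\nu_q$ is transcendental over $\K\nu_0$. For the Krull step: if $f = \sum_{i=0}^s f_iq^i$ is the $q$-expansion of some $f\in\K[x]$ with $\nu_q(f)=\infty$, then each $\nu(f_iq^i)=\infty$; since $\nu$ is Krull and $q\neq 0$, this forces $f_i=0$ for all $i$, hence $f=0$. Thus $\supp(\nu_q)=\{0\}$ and $\nu_q$ extends uniquely to $\K(x)$.

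Next I would construct the candidate. Every $\nu_q$-value has the form $\nu(f_i)+i\nu(q)$ with $\deg f_i<\deg q$, so $\nu_q\K[x]\subseteq \nu\K[x]$, and because $\nu$ is valuation-algebraic, $\nu(q)$ is torsion modulo $\nu_0\K$. Choose the least $m\geq 1$ with $m\nu(q)\in\nu_0\K$ and pick $c\in\K^*$ satisfying $m\nu(q)=\nu_0(c)$. Set $u:=q^m/c\in\K(x)$; then $\nu_q(u)=m\nu(q)-\nu_0(c)=0$, so the residue $\theta$ of $u$ in $\K(x)\nu_q$ is defined, and I claim that $\theta$ is transcendental over $\K\nu_0$.

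Suppose for contradiction there exist $a_0,\ldots,a_k\in\VR_{\nu_0}$ with $\nu_0(a_k)=0$ such that $\sum_{i=0}^k\bar a_i\theta^i=0$ in $\K(x)\nu_q$. Multiplying by $c^k$, this means $\nu_q\bigl(\sum_{i=0}^k a_ic^{k-i}q^{mi}\bigr)>k\nu_0(c)$. Each coefficient $a_ic^{k-i}$ has degree $0<\deg q$, so the sum is already its own $q$-expansion, and
\begin{align*}
\nu_q\!\left(\sum_{i=0}^k a_ic^{k-i}q^{mi}\right) &= \min_{i\,:\,a_i\neq 0}\bigl(\nu_0(a_i)+(k-i)\nu_0(c)+mi\nu(q)\bigr) \\
&= \min_{i\,:\,a_i\neq 0}\bigl(\nu_0(a_i)+k\nu_0(c)\bigr)=k\nu_0(c),
\end{align*}
using $m\nu(q)=\nu_0(c)$ and $\nu_0(a_k)=0$. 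This contradicts the strict inequality, so $\theta$ is transcendental over $\K\nu_0$ and $\nu_q$ is residue-transcendental. The main delicacy is this final calculation: the identity $m\nu(q)=\nu_0(c)$ is exactly what collapses every term in the putative algebraic relation to the same quantity $\nu_0(a_i)+k\nu_0(c)$, and the fact that $\nu_q$ of a $q$-expansion equals the minimum of the term-wise values (so no cancellation is possible) then produces the contradiction.
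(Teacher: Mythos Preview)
Your proof is correct, but it proceeds along a different line from the paper's. The paper invokes an external result (Theorem~3.1 of \cite{josneiKeyPolyMinimalPairs}) asserting that $\nu_q$ is always valuation-transcendental, then observes that $\nu_q(\K[x])\subseteq\nu(\K[x])$ forces $\nu_q(\K[x])/\nu_0\K$ to be torsion (since $\nu(\K[x])/\nu_0\K$ is), so $\nu_q$ cannot be value-transcendental; Abhyankar's inequality then leaves residue-transcendental as the only option. Your argument instead builds an explicit transcendental residue $\theta=(q^m/c)\nu_q$ by hand, exploiting the torsion of $\nu(q)$ modulo $\nu_0\K$ and the fact that the $q$-expansion computes $\nu_q$ term by term with no cancellation. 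This is more elementary and self-contained---it avoids citing the valuation-transcendence theorem and Abhyankar---at the cost of a longer computation. (A minor remark: you never use the minimality of $m$; any positive $m$ with $m\nu(q)\in\nu_0\K$ would do.)
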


\begin{proof} By Theorem 3.1 of \cite{josneiKeyPolyMinimalPairs}, we have that $\nu_q$ is valuation-transcendental. Given $f\in \K[x]$, $f\neq 0$, we know that $$\nu_q(f) = \min_{0\leq i \leq r} \{ \nu(f_i)+i\nu(q)  \}\in \nu(\K[x]),$$ where $f_0, \ldots, f_r$ are the coefficients  of the $q$-expansion of $f$. Since $\nu$ is valuation-algebraic, $\nu(\K[x])/\nu_0\K$ is a torsion group. Hence, $\nu_q(f)$ is a torsion element in $\nu_q(\K[x])/\nu_0\K$ for every $f\in \K[x]$, $f\neq 0$. Therefore, $\nu_q$ is not value-transcendental and, due to Remark~\ref{obsAbhyankar}, we see that $\nu_q$ is a residue-transcendental valuation.

\end{proof}

To a given valuation-algebraic valuation $\nu$, we are going to associate a totally ordered subset of $\mathcal{V}$. In order to do that, we use the results of \cite{josneiKeyPolyPropriedades} on complete sets, which we define bellow. 

\vspace{0.2cm} 

\begin{Def} Let $\nu$ be a valuation on $\K[x]$. A set $\textbf{Q}\subset \K[x]$ of key polynomials for $\nu$ is called a \textbf{complete set of key polynomials} for $\nu$ if for every $f\in \K[x]$ with $\deg(f)\geq 1$ there exists $Q\in \textbf{Q}$ with $\deg(Q)\leq \deg(f)$ such that $\nu_Q(f)=\nu(f)$.
\end{Def}

\vspace{0.1cm}

\begin{Prop}\label{propcompleteset}(Theorem 1.1 of \cite{josneiKeyPolyPropriedades})
Every valuation $\nu$ on $\K[x]$ admits a complete set \textbf{Q} of key polynomials.
\end{Prop}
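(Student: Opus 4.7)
The plan is to construct a complete set $\mathbf{Q}$ by recursion on the degree of its elements. For each $n \geq 1$ I would build a set $\mathbf{Q}^{(n)}$ of key polynomials of degree at most $n$ such that every $f \in \K[x]$ with $\deg(f) \leq n$ is covered, meaning that some $Q \in \mathbf{Q}^{(n)}$ with $\deg(Q) \leq \deg(f)$ satisfies $\nu_Q(f) = \nu(f)$. Setting $\mathbf{Q} := \bigcup_{n \geq 1} \mathbf{Q}^{(n)}$ then yields the desired complete set.

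For the base case $n = 1$, I would take $\mathbf{Q}^{(1)}$ to be a cofinal subfamily, with respect to the order $\preceq$ of \eqref{QmenorQlinha}, of the monic linear key polynomials $x - a \in \K[x]$. Constants are covered trivially by any $Q$. For a polynomial of degree $1$, Lemma~\ref{prop3itensQQlinhaPoliChaves} together with the cofinality of $\mathbf{Q}^{(1)}$ jointly provide a $Q \in \mathbf{Q}^{(1)}$ with $\nu_Q(f) = \nu(f)$.

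For the inductive step, assume $\mathbf{Q}^{(n-1)}$ has been constructed and let $f \in \K[x]$ with $\deg(f) = n$. If some $Q \in \mathbf{Q}^{(n-1)}$ already covers $f$, there is nothing to do. Otherwise, I would consider the set $\Psi_n$ of key polynomials of degree exactly $n$. If there exists $Q \in \Psi_n$ with $\nu_Q(f) = \nu(f)$, add such a $Q$ to $\mathbf{Q}^{(n)}$. If no such $Q$ exists, then $f \in \mathcal{K}_n$, and Definition~\ref{defLKP2} supplies a limit key polynomial $Q_n$ for $\Psi_n$ of least degree in $\mathcal{K}_n$; in particular $\deg(Q_n) \leq \deg(f) = n$, and I would add $Q_n$ to $\mathbf{Q}^{(n)}$. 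Lemma~\ref{lemPropQn}(3) ensures that $Q_n$ covers every polynomial of degree strictly less than $\deg(Q_n)$, and a short analysis of the $Q_n$-expansion of $f$, using that $\deg(f) \leq \deg(Q_n)$ and that $\nu$ is a valuation, yields $\nu_{Q_n}(f) = \nu(f)$ as well.

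The main obstacle will be the dichotomy in the inductive step: one must guarantee that when no existing key polynomial covers $f$, either a key polynomial of degree exactly $n$ suffices or a limit key polynomial of degree at most $n$ is available. The order-theoretic analysis of $\Psi_n$ carried out in Section~\ref{LKPLimit}, particularly Corollary~\ref{corQnNotvStable} together with Lemma~\ref{lemPropQn}, is what makes this step work. Care is needed in the degenerate cases where $\Psi_n$ admits a maximum or is empty, since the definition of a limit key polynomial presupposes that $\{\nu(Q) \mid Q \in \Psi_n\}$ has no maximal element; in those cases I would show directly that the existing key polynomials of degree $< n$, together with at most one new key polynomial of degree $n$, already cover every $f$ of degree at most $n$, so that the induction proceeds without any appeal to limit objects.
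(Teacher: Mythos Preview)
The paper does not prove this proposition at all: it is quoted verbatim as Theorem~1.1 of \cite{josneiKeyPolyPropriedades} and used as a black box in Section~\ref{ValAlgLimit}. So there is no proof here to compare your sketch against; any argument you supply would be additional to what the paper contains.

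That said, your sketch has a genuine gap in the inductive step. You write that if $f$ has degree $n$ and lies in $\mathcal{K}_n$, then the limit key polynomial $Q_n$ satisfies $\deg(Q_n)\le\deg(f)=n$, and you then plan to use $Q_n$ to cover $f$. But this cannot happen. By Lemma~\ref{lemPropQn}(1) one has $\deg(Q_n)\ge n$; if $\deg(Q_n)=n$ then $Q_n\in\Psi_n$ (it is a key polynomial by the remark following Definition~\ref{defLKP2}), and then $Q_n\in\mathcal{K}_n$ would force $\nu_{Q_n}(Q_n)<\nu(Q_n)$, which is absurd. Hence $\deg(Q_n)>n$ strictly, so no polynomial of degree $\le n$ lies in $\mathcal{K}_n$. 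In other words, whenever $\Psi_n\neq\emptyset$ and $\{\nu(Q)\mid Q\in\Psi_n\}$ has no maximum, \emph{every} $f$ of degree $n$ is already covered by some $Q\in\Psi_n$, and your ``else'' branch is vacuous. Limit key polynomials are not what covers degree-$n$ polynomials at stage $n$; rather, $Q_n$ itself is a polynomial of some degree $m>n$ that must be covered at stage $m$, and it is there that one must insert $Q_n$ into $\mathbf{Q}$.

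So the recursion-on-degree idea is the right shape, and it is indeed essentially how \cite{josneiKeyPolyPropriedades} proceeds, but the bookkeeping is different from what you propose: at each stage one adjoins (a cofinal family in) $\Psi_n$ together with any limit key polynomials whose degree equals $n$ and which arose from $\Psi_m$ for some $m<n$. Your current organization misplaces the role of $Q_n$ and, as written, would not produce a set satisfying the degree constraint $\deg(Q)\le\deg(f)$ in the definition of a complete set.
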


\vspace{0.1cm} 

\begin{Obs} As remarked in \cite{josneimonomial}, the definition of complete set in Theorem 1.1 of \cite{josneiKeyPolyPropriedades}  does not require that $\deg(Q)\leq \deg(f)$. However, the proof of the Theorem shows that this inequality always holds.
\end{Obs}
%
%sequencia completa CS1 e CS2
\vspace{0.1cm} 

\begin{Prop}\label{propValAlgStable} Let $\nu\in \mathcal{V}$ be a valuation-algebraic valuation. Then there exists a totally ordered subset  $\mathfrak{v} = \{ \nu_i \}_{i\in I}\subset \mathcal{V}$  without maximum such that every $f\in \K[x]$ is $\mathfrak{v}$-stable, $\nu = \displaystyle \sup_{i\in I}\nu_i$ and each $\nu_i$ is residue-transcendental.
\end{Prop}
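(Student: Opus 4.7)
The plan is to take $\textbf{Q}$ to be a complete set of key polynomials for $\nu$, which exists by Proposition~\ref{propcompleteset}, and to set $\mathfrak{v} = \{\nu_Q\}_{Q\in \textbf{Q}}$ with $I = \textbf{Q}$ carrying the order induced from $\mathcal{V}$. Four things then need to be verified: (i) $\mathfrak{v}$ is totally ordered, (ii) each $\nu_Q$ is residue-transcendental, (iii) every $f\in\K[x]$ is $\mathfrak{v}$-stable and $\nu=\sup_{Q\in\textbf{Q}}\nu_Q$, and (iv) $\mathfrak{v}$ has no maximal element.

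For (i), given $Q,Q'\in \textbf{Q}$, the quantities $\delta(Q), \delta(Q')$ lie in a totally ordered set, so without loss of generality $\delta(Q)\le \delta(Q')$; by Lemma~\ref{lempolichaveEpsilonmenor}(2) this gives $\nu_Q\le \nu_{Q'}$ pointwise on $\K[x]$, hence $\nu_Q\le \nu_{Q'}$ in $\mathcal{V}$. Property (ii) is immediate from Lemma~\ref{lemValuationAlgTrunc}: each $\nu_Q$ is a valuation (because $Q$ is a key polynomial) and $\nu$ is valuation-algebraic by hypothesis, so Lemma~\ref{lemValuationAlgTrunc} gives residue-transcendence of $\nu_Q$.

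For (iii), I exploit completeness. Given any $f\in \K[x]$ there exists $Q_f\in \textbf{Q}$ with $\nu_{Q_f}(f)=\nu(f)$. For any $Q'\in\textbf{Q}$ with $\nu_{Q'}\ge \nu_{Q_f}$, the chain
$$
\nu_{Q_f}(f)\;\le\;\nu_{Q'}(f)\;\le\;\nu(f)\;=\;\nu_{Q_f}(f)
$$
(the right inequality holds because truncations are always bounded above by $\nu$) forces equality throughout. Hence $f$ is $\mathfrak{v}$-stable with $\mathfrak{v}(f)=\nu(f)$, and the construction of the supremum in Proposition~\ref{lemVstable} yields $\nu=\sup_{Q\in\textbf{Q}}\nu_Q$.

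For (iv), I argue by contradiction. Suppose $\nu_{Q^*}$ is maximal in $\mathfrak{v}$. Then for each $f\in \K[x]$, the element $Q_f$ provided by completeness satisfies $\nu_{Q_f}\le \nu_{Q^*}\le \nu$, so the same chain of inequalities gives $\nu_{Q^*}(f)=\nu(f)$; as $f$ is arbitrary, $\nu_{Q^*}=\nu$. But by (ii), $\nu_{Q^*}$ is residue-transcendental, contradicting the hypothesis that $\nu$ is valuation-algebraic. The main subtlety—rather than a real obstacle—is (iii), where one must keep straight that truncations always lie below $\nu$ and that the completeness hypothesis provides $Q_f$ with exactly the right degree bound to squeeze $\nu_{Q'}(f)$ between $\nu_{Q_f}(f)$ and $\nu(f)$; once this is noted, the argument falls into place and the non-uniqueness of $Q_f$ is harmless because all valid choices yield the common value $\nu(f)$.
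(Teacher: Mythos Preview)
Your proof is correct and follows essentially the same route as the paper: take a complete set $\textbf{Q}$ of key polynomials, set $\mathfrak{v}=\{\nu_Q\}_{Q\in\textbf{Q}}$, and verify the four properties exactly as you outline. The only cosmetic difference is that in step (iii) the paper case-splits on $\delta$ and invokes Lemma~\ref{lempolichaveEpsilonmenor}(3), whereas you use the cleaner squeeze $\nu_{Q_f}(f)\le\nu_{Q'}(f)\le\nu(f)=\nu_{Q_f}(f)$ via the elementary bound $\nu_Q\le\nu$; both arguments are equivalent.
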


\begin{proof}
By Proposition~\ref{propcompleteset}, there exists a complete set \textbf{Q} of key polynomials for $\nu$. Consider $\mathfrak{v} = \{ \nu_{Q} \}_{Q\in \textbf{Q}}\subset \mathcal{V}$, which is totally ordered due to Lemma~\ref{lempolichaveEpsilonmenor} (2). 
We order the set $\textbf{Q}$ by posing $Q\preceq Q'$ if and only if $\nu_Q\leq \nu_{Q'}$. By Lemma~\ref{lemValuationAlgTrunc}, each $\nu_Q$ is residue-transcendental.

\vspace{0.2cm} 

We now show that every $f\in \K[x]$ is $\mathfrak{v}$-stable. Indeed, for every $f\in \K[x]$, there exists $Q\in \textbf{Q}$ such that $\deg(Q)\leq \deg(f)$ and $\nu_Q(f)=\nu(f)$. If $Q\preceq Q'$, then we have the following:
\begin{itemize}
\item if $\delta(Q')\leq \delta(Q)$, then $\nu_{Q'}(f)\leq \nu_Q(f)$ (Lemma~\ref{lempolichaveEpsilonmenor} (2)), that is, $\nu_{Q'}(f) = \nu_Q(f)=\nu(f)$;

\item if $\delta(Q)< \delta(Q')$, then by Lemma~\ref{lempolichaveEpsilonmenor} (3) we have that $\nu_Q(f)=\nu(f)$ implies $\nu_{Q'}(f)=\nu(f)$.
\end{itemize}
Hence, $Q\preceq Q'$ implies $\nu_{Q'}(f)=\nu_{Q}(f)=\nu(f)$. That is, $f$ is $\mathfrak{v}$-stable and $\nu=\underset{Q\in \textbf{Q}}{\sup} \nu_Q$ as in  Proposition~\ref{lemVstable}. 
Moreover, suppose  $\{ \nu_{Q} \}_{Q\in \textbf{Q}}$ has a maximum. Then $$\nu =\underset{Q\in \textbf{Q}}{\sup} \nu_Q = \nu_{Q_m}$$ for some $Q_m\in \textbf{Q}$, which is a contradiction since $\nu$ is valuation-algebraic and $\nu_{Q_m}$ is residue-transcendental. Therefore, $\{ \nu_{Q} \}_{Q\in \textbf{Q}}$ does not have a maximum.

\end{proof}

\begin{Cor}\label{corValgLim}
Let $\nu\in \mathcal{V}$ be a valuation-algebraic valuation and take 
$\mathfrak{v} = \{ \nu_{i} \}_{i\in I}\subset \mathcal{V}$ as in Proposition~\ref{propValAlgStable}. Then $\underset{\longrightarrow}{\lim\,}\mathcal{G}_{\nu_i}\cong \mathcal{G}_{\nu}$  
% Let $\nu\in \mathcal{V}$ be a valuation-algebraic valuation and take 
%$\mathfrak{v} = \{ \nu_{Q} \}_{Q\in \textbf{Q}}\subset \mathcal{V}$ as in Proposition~\ref{propValAlgStable}. Then $\underset{\longrightarrow}{\lim\,}\mathcal{G}_{Q}\cong \mathcal{G}_{\nu}$  
%as abelian groups. Also, $\underset{\longrightarrow}{\lim}\mathcal{G}_Q\cong R_{Q_n}$ 
as commutative rings with unity.
% Moreover, take each $a_i$  with minimal degree over $\K$ and  consider $Q_i$ the minimal polynomial of $a_i$ over $\K$. Then,  $Q_i$ is a key polynomial for $\nu_i$,  $\nu_i = \overline{\nu}_{a_i,\delta_i}|_{\K[x]} = \nu_{Q_i}$ and $\mathcal{G}_{\nu_i}=R_{Q_i}[y_{Q_i}]$. 
\end{Cor}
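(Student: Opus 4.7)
The plan is to simply verify that the hypotheses of Corollary~\ref{teoLimitSeqStable} are satisfied by $\mathfrak{v} = \{\nu_Q\}_{Q \in \textbf{Q}}$, after which the conclusion follows at once. Proposition~\ref{propValAlgStable} already tells us that $\mathfrak{v}$ is a totally ordered subset of $\mathcal{V}$ without maximal element, that every $f \in \K[x]$ is $\mathfrak{v}$-stable, and that $\nu = \sup_{Q \in \textbf{Q}} \nu_Q$. This places us squarely in the second of the three cases distinguished in Section~\ref{TotallyOrderLimits} (no maximal element, but all polynomials $\mathfrak{v}$-stable), which is the case covered by Corollary~\ref{teoLimitSeqStable}.

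First, I would recall that by Lemma~\ref{lemDirectSystemValTotalOrder} the family $\{(\mathcal{G}_Q, \phi_{QQ'})\}_{Q \preceq Q'}$ (where $Q \preceq Q'$ iff $\nu_Q \leq \nu_{Q'}$) is a direct system, so the direct limit $\underset{\longrightarrow}{\lim\,}\mathcal{G}_Q$ makes sense. Next, from Proposition~\ref{lemVstable} applied to $\mathfrak{v}$, the valuation $\nu' := \sup_{Q \in \textbf{Q}} \nu_Q$ is an element of $\mathcal{V}$ satisfying $\nu_Q \leq \nu'$ for every $Q \in \textbf{Q}$, and it is characterized as the unique such upper bound lying below any valuation $\geq \nu_Q$ for all $Q$. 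Since $\nu$ itself satisfies $\nu_Q \leq \nu$ for all $Q \in \textbf{Q}$ (a standard property of truncations at key polynomials) and $\nu(f) = \nu_Q(f)$ for $Q$ chosen from the complete set so that $\nu_Q(f) = \nu(f)$, we obtain $\nu' = \nu$.

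Finally, an application of Corollary~\ref{teoLimitSeqStable} to the totally ordered set $\mathfrak{v}$, together with the identification $\nu = \sup_{Q \in \textbf{Q}} \nu_Q$ just established, yields
\[
\underset{\longrightarrow}{\lim\,}\mathcal{G}_Q \;\cong\; \mathcal{G}_\nu
\]
as commutative rings with unity. There is no genuine obstacle here: the corollary is designed precisely for this situation, and all the real work has already been done in Proposition~\ref{propValAlgStable} (which shows that valuation-algebraicness forces every polynomial to be $\mathfrak{v}$-stable via the complete set of key polynomials) and in Corollary~\ref{teoLimitSeqStable} itself.
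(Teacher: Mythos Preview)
Your proposal is correct and follows exactly the same approach as the paper: the paper's proof consists of the single sentence ``It follows from Corollary~\ref{teoLimitSeqStable},'' and your argument is simply an expanded version of that invocation, spelling out that Proposition~\ref{propValAlgStable} provides precisely the hypotheses (totally ordered, every $f$ is $\mathfrak{v}$-stable, $\nu=\sup_{Q}\nu_Q$) needed to apply Corollary~\ref{teoLimitSeqStable}. Your middle paragraph re-deriving $\nu'=\nu$ is redundant, since that identification is already part of the statement of Proposition~\ref{propValAlgStable}, but it does no harm.
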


\begin{proof}
It follows from Corollary~\ref{teoLimitSeqStable}.

\end{proof}

\end{document}